\begin{document}
 \bibliographystyle{plain}

 \newtheorem{theorem}{Theorem}
 \newtheorem{lemma}[theorem]{Lemma}
 \newtheorem{proposition}[theorem]{Proposition}
 \newtheorem{corollary}[theorem]{Corollary}
 \theoremstyle{definition}
 \newtheorem{definition}[theorem]{Definition}
 \newtheorem{example}[theorem]{Example}
 \theoremstyle{remark}
 \newtheorem{remark}[theorem]{Remark}
 \newcommand{\mc}{\mathcal}
 \newcommand{\A}{\mc{A}}
 \newcommand{\B}{\mc{B}}
 \newcommand{\cc}{\mc{C}}
 \newcommand{\D}{\mc{D}}
 \newcommand{\E}{\mc{E}}
 \newcommand{\F}{\mc{F}}
 \newcommand{\G}{\mc{G}}
 \newcommand{\sH}{\mc{H}}
 \newcommand{\I}{\mc{I}}
 \newcommand{\J}{\mc{J}}
 \newcommand{\K}{\mc{K}}
 \newcommand{\lL}{\mc{L}}
 \newcommand{\M}{\mc{M}}
 \newcommand{\nn}{\mc{N}}
 \newcommand{\rr}{\mc{R}}
 \newcommand{\sS}{\mc{S}}
 \newcommand{\U}{\mc{U}}
 \newcommand{\X}{\mc{X}}
 \newcommand{\Y}{\mc{Y}}
 \newcommand{\C}{\mathbb{C}}
 \newcommand{\R}{\mathbb{R}}
 \newcommand{\N}{\mathbb{N}}
 \newcommand{\Q}{\mathbb{Q}}
 \newcommand{\Z}{\mathbb{Z}}
 \newcommand{\csch}{\mathrm{csch}}
 \newcommand{\tF}{\widehat{F}}
 \newcommand{\tG}{\widehat{G}}
 \newcommand{\tH}{\widehat{H}}
 \newcommand{\tf}{\widehat{f}}
 \newcommand{\ug}{\widehat{g}}
 \newcommand{\wg}{\widetilde{g}}
 \newcommand{\uh}{\widehat{h}}
 \newcommand{\wh}{\widetilde{h}}
 \newcommand{\wl}{\widetilde{l}}
 \newcommand{\tk}{\widehat{k}}
 \newcommand{\tK}{\widehat{K}}
 \newcommand{\tl}{\widehat{l}}
 \newcommand{\tL}{\widehat{L}}
 \newcommand{\tm}{\widehat{m}}
 \newcommand{\tM}{\widehat{M}}
 \newcommand{\tp}{\widehat{\varphi}}
 \newcommand{\tq}{\widehat{q}}
 \newcommand{\tT}{\widehat{T}}
 \newcommand{\tU}{\widehat{U}}
 \newcommand{\tu}{\widehat{u}}
 \newcommand{\tV}{\widehat{V}}
 \newcommand{\tv}{\widehat{v}}
 \newcommand{\tW}{\widehat{W}}
 \newcommand{\ba}{\boldsymbol{a}}
 \newcommand{\bal}{\boldsymbol{\alpha}}
 \newcommand{\bx}{\boldsymbol{x}}
 \newcommand{\p}{\varphi}
 \newcommand{\f}{\frac52}
 \newcommand{\g}{\frac32}
 \newcommand{\h}{\frac12}
 \newcommand{\hh}{\tfrac12}
 \newcommand{\ds}{\text{\rm d}s}
 \newcommand{\dt}{\text{\rm d}t}
 \newcommand{\du}{\text{\rm d}u}
 \newcommand{\dv}{\text{\rm d}v}
 \newcommand{\dw}{\text{\rm d}w}
 \newcommand{\dx}{\text{\rm d}x}
 \newcommand{\dy}{\text{\rm d}y}
 \newcommand{\dl}{\text{\rm d}\lambda}
 \newcommand{\dmu}{\text{\rm d}\mu(\lambda)}
 \newcommand{\dnu}{\text{\rm d}\nu(\lambda)}
\newcommand{\dnus}{\text{\rm d}\nu_{\sigma}(\lambda)}
 \newcommand{\dlnu}{\text{\rm d}\nu_l(\lambda)}
 \newcommand{\dnnu}{\text{\rm d}\nu_n(\lambda)}
\newcommand{\sech}{\text{\rm sech}}
 \def\today{\ifcase\month\or
  January\or February\or March\or April\or May\or June\or
  July\or August\or September\or October\or November\or December\fi
 \space\number\day, \number\year}

\title{Bounding $\zeta(s)$ in the critical strip}
\author[Emanuel Carneiro and Vorrapan Chandee]{Emanuel Carneiro and Vorrapan Chandee}

\date{\today}
\subjclass[2000]{Primary 11M06 }
\keywords{Riemann zeta-function; extremal functions; exponential type.}
\address{School of Mathematics, Institute for Advanced Study, Princeton, NJ 08540.}
\email{ecarneiro@math.ias.edu}
\address{Department of Mathematics, Stanford University, 450 Sierra Mall, Bldg. 380, Stanford, CA 94305}
\email{vchandee@math.stanford.edu}

\begin{abstract} Assuming the Riemann Hypothesis, we make use of the recently discovered \cite{CLV} extremal majorants and minorants of prescribed exponential type for the function $\log\left(\tfrac{4 + x^2}{(\alpha-1/2)^2 + x^2}\right)$ to find upper and lower bounds with explicit constants for $\log|\zeta(\alpha + it)|$ in the critical strip, extending the work of Chandee and Soundararajan \cite{CS}. 
\end{abstract}

\maketitle

\numberwithin{equation}{section}

\section{Introduction}
Littlewood showed in 1924 (see \cite{L}) that the Riemann Hypothesis (RH) implies a strong form of the Lindel\"{o}f Hypothesis, namely, on RH, for large real numbers $t$ there is a constant $C$ such that
\begin{equation}\label{Intro1}
 \left|\zeta\bigl(\tfrac{1}{2} + it\bigr)\right|\ll \exp\left( C \,\frac{\log t }{\log \log t}\right).
\end{equation}
Over the years no improvement has been made on the order of magnitude of the upper bound (\ref{Intro1}). The advances have rather focused on reducing the value of the admissible constant $C$ (see for instance the works by Ramachandra and Sankaranarayanan \cite{RS} and Soundararajan \cite{Sound}) and extending the results to general $L$-functions (see the work of Chandee \cite{Chandee}). A similar situation occurs when bounding the argument function $S(t) = \tfrac{1}{\pi} \arg \zeta\bigl(\tfrac{1}{2} + it\bigr)$, where the argument is defined by continuous variation along the line segments joining $2, 2 + it$ and $\tfrac{1}{2} + it$, taking the argument of $\zeta(s)$ at $2$ to be zero. Under RH, Littlewood showed that
\begin{equation}\label{Intro2}
 |S(t)| \ll \frac{\log t}{\log \log t}\,,
\end{equation}
and this bound has not been improved except for the size of the implied constant.

Recently, the idea of using the theory of extremal functions of exponential type was proved useful in both contexts, resulting in improved constants (and best up-to-date) for the upper bounds (\ref{Intro1}) and (\ref{Intro2}). The method of Goldston and Gonek \cite{GG} uses the explicit formula together with the classical Beurling-Selberg majorants and minorants of characteristic functions of intervals, and leads to the bound
\begin{equation*}\label{Intro3}
 |S(t)| \leq \bigl(\tfrac{1}{2} + o(1)\bigr) \frac{\log t}{\log \log t}\,.
\end{equation*}

In \cite{CS}, Chandee and Soundararajan recognized that the corresponding treatment for $\left|\zeta\bigl(\tfrac{1}{2} + it\bigr)\right|$, using the Hadamard's factorization and the explicit formula, would require the extremal minorant for the function $\log\left(\tfrac{4 + x^2}{x^2}\right)$, available in the framework of Carneiro and Vaaler \cite{CV2}. This combination was successful and led to the following bound \cite[Theorem 1]{CS}
\begin{equation}\label{Intro4}
 \left|\zeta\bigl(\tfrac{1}{2} + it\bigr)\right| \ll \exp \left( \frac{\log 2}{2} \frac{\log t}{\log \log t} + O\left( \frac{\log t \,\log \log \log t}{(\log \log t)^2}\right) \right)\,.
\end{equation}
It was mentioned in that paper that a similar approach to bounding $|\zeta(\alpha + it)|$, for $\alpha \neq 1/2$, would require the solution of the Beurling-Selberg extremal problem for the function
\begin{equation}\label{Intro5}
f_{\alpha}(x) = \log\left(\frac{4 + x^2}{(\alpha-1/2)^2 + x^2}\right)\,,
\end{equation}
which was not available at that particular time.

Very recently, Carneiro, Littmann and Vaaler in \cite{CLV} developed a new approach to the Beurling-Selberg extremal problem based on the solution for the Gaussian and tempered distribution arguments. With this method, they were able to extend the solution of this problem to a wide class of even functions, in particular, including the desired family (\ref{Intro5}). 

The purpose of this paper should be clear at this point. Here we make use of the recently discovered extremal majorants and minorants for $f_{\alpha}(x)$ to find upper and lower bounds with explicit constants for $|\zeta(\alpha + it)|$ on the critical strip. Observe that majorants for $f_{\alpha}(x)$ exist when $\alpha \neq 1/2$ and this is what makes the lower bounds possible. For simplicity, we will focus on the off-critical-line case (although the methods here plainly apply to the case $\alpha = 1/2$ with slightly different Fourier transform representations than those of \cite{CS}), assuming from now on that $\alpha = \alpha(t)$ is a real-valued function with $1/2 < \alpha \leq 1$. Since $|\zeta(\alpha + it)| = |\zeta(\alpha - it)|$ we might as well assume that $t \geq 0$. Our main results are the following.

\begin{theorem}[Upper Bound]\label{thm:upperbound} 
Assume RH. For large real numbers $t$, we have
\begin{equation*}
\log |\zeta(\alpha + it)| \leq  
\left\{ \begin{array}{l}
\log \left( 1 +  (\log t)^{1- 2\alpha}\right) \frac{\log t}{2\log \log t} + O\left( \frac{(\log t)^{2-2\alpha}}{(\log \log t)^2} \right), \\
\\
\ \ \ \ \ \ \ \ \ \ \ \ \ \ \ \ \ \ \ \ \ \ \ \ \ \ \ \ \ \ \ \ \ \ \ \ {\rm if}\,\,\, (\alpha - 1/2)\log \log t = O(1); \\
\\
\log(\log \log t) + O(1), \ \ \ \ \ \ \ \  {\rm if}\,\,\, (1- \alpha)\log \log t = O(1); \\
\\
\left( \frac{1}{2} + \frac{2\alpha - 1}{\alpha (1 - \alpha)} \right)\frac{(\log t)^{2 - 2\alpha}}{\log \log t}  + \log (2\log \log t) + O\left(\frac{(\log t)^{2 - 2\alpha}}{(1-\alpha)^2(\log \log t)^2}\right), \\
\ \ \ \ \ \ \ \ \ \ \ \ \ \ \ \ \ \ \ \ \ \ \ \ \ \ \ \ \ \ \ \ \ \ \ \ \ \ \ \ \ \ \ \ \ \ \ \ \ \ \ \ \ \ \ \ \ \ \ \ \ \ \ {\rm otherwise}.
\end{array}\right. 
\end{equation*}
\end{theorem}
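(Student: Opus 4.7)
The plan is to prove the upper bound by combining three ingredients: (i) an explicit-formula representation of $\log|\zeta(\alpha+it)|$ as a sum over nontrivial zeros involving $f_\alpha$, (ii) the extremal minorant of $f_\alpha$ of prescribed exponential type from \cite{CLV}, and (iii) the Guinand--Weil explicit formula, which converts the resulting zero-sum into a prime-power sum.

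The starting point is the Hadamard factorization of the completed zeta function $\xi(s)=\tfrac12 s(s-1)\pi^{-s/2}\Gamma(s/2)\zeta(s)$. Comparing $\log|\xi(\alpha+it)|$ with $\log|\xi(\tfrac52+it)|$ after pairing conjugate zeros $\rho=\tfrac12\pm i\gamma$ (so each pair contributes $\tfrac12\log\frac{(\alpha-\tfrac12)^2+(t-\gamma)^2}{4+(t-\gamma)^2}=-\tfrac12 f_\alpha(t-\gamma)$), evaluating the polynomial and $\Gamma$-factor differences by Stirling, and using $\log|\zeta(\tfrac52+it)|=O(1)$, one obtains under RH the identity
\begin{equation*}
2\log|\zeta(\alpha+it)| = -\sum_\gamma f_\alpha(t-\gamma) + \left(\tfrac52-\alpha\right)\log\tfrac{t}{2\pi} + O(1).
\end{equation*}
Since $f_\alpha\ge 0$, bounding $\log|\zeta(\alpha+it)|$ from above requires a lower bound on the zero-sum, so I replace $f_\alpha$ by the CLV extremal minorant $m_\delta\le f_\alpha$ of exponential type $2\pi\delta$, whose Fourier transform $\tm_\delta$ is supported in $[-\delta,\delta]$.

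The Guinand--Weil formula then evaluates $\sum_\gamma m_\delta(t-\gamma)$ as $\tfrac{\log(t/(2\pi))}{2\pi}\tm_\delta(0)$ plus a truncated prime sum $2\sum_{n\le e^{2\pi\delta}}\Lambda(n)n^{-1/2}\cos(t\log n)\,\tm_\delta\!\left(\tfrac{\log n}{2\pi}\right)$ plus an $O(1)$ archimedean error, yielding
\begin{equation*}
2\log|\zeta(\alpha+it)| \le \left[\left(\tfrac52-\alpha\right)-\tfrac{\tm_\delta(0)}{2\pi}\right]\log\tfrac{t}{2\pi} + 2\sum_{n\le e^{2\pi\delta}}\frac{\Lambda(n)}{\sqrt{n}}\left|\tm_\delta\!\left(\tfrac{\log n}{2\pi}\right)\right| + O(1).
\end{equation*}
Since $\int_\R f_\alpha(u)\du=2\pi(\tfrac52-\alpha)$, the bracketed coefficient of $\log(t/(2\pi))$ equals $(2\pi)^{-1}$ times the $L^1$-deficiency $\int_\R (f_\alpha-m_\delta)\du$ of the CLV minorant. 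Inserting the explicit formulas from \cite{CLV} for $\tm_\delta$, estimating the truncated prime sum (whose dominant mass comes from primes near $n\sim e^{2\pi\delta}$), and optimizing the type parameter $\delta$ then produces the stated bounds.

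The three cases in the theorem reflect the different asymptotic behavior of the CLV extremal data as the pole parameter $a=\alpha-\tfrac12$ varies: when $a\log\log t=O(1)$ the function $f_\alpha$ behaves near the origin like $\log((4+x^2)/x^2)$, essentially the Chandee--Soundararajan regime with leading behavior $\log(1+(\log t)^{1-2\alpha})\,\frac{\log t}{2\log\log t}$; when $(1-\alpha)\log\log t=O(1)$ the function $f_\alpha$ is essentially a bounded bump, so the $\log t$ main term collapses to $\log\log\log t$ size; the intermediate regime interpolates at scale $(\log t)^{2-2\alpha}/\log\log t$. The main obstacle is to carry out the joint $(\delta,a)$-asymptotic analysis of both the $L^1$-deficiency $\tm_\delta(0)-2\pi(\tfrac52-\alpha)$ and the truncated prime sum precisely enough to extract the explicit leading constants in each regime, and to handle the transitions at $\alpha\to\tfrac12$ and $\alpha\to 1$ where the character of the leading term changes.
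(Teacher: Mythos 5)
Your outline follows the paper's route almost step for step: Hadamard factorization to express $\log|\zeta(\alpha+it)|$ through a zero-sum in $f_\alpha$, substitution of the CLV extremal minorant of prescribed exponential type, the Guinand--Weil explicit formula, and optimization of the type at $\pi\Delta=\log\log t$. The reduction is correct, and you rightly flag the joint asymptotics of the $L^1$-deficiency and the prime sum as the crux. What you leave as the ``main obstacle'' is, however, the bulk of the argument rather than a matter of careful bookkeeping. The Fourier transform of the minorant is an infinite alternating series in an auxiliary index $k$, so the prime-power term in the explicit formula is a double sum over $n$ and $k$; the paper tames it with two monotonicity lemmas (Lemmas~\ref{lem:initialIneq} and~\ref{lem:boundSumOverk}), which show that the $k$-th alternating increments are nonnegative and decreasing and are squeezed between explicit comparison quantities, and only after that can the $k$-sum be collapsed to its $k=0$ piece plus a controllable geometric tail yielding the factor $x^{\alpha-1/2}/(x^{\alpha-1/2}+1)$. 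Your one-line bound $2\sum_n\Lambda(n)n^{-1/2}\bigl|\widehat{m}_\delta\bigl(\tfrac{\log n}{2\pi}\bigr)\bigr|$ also glosses over a structural point: the $e^{-4\pi(\cdot)}$ contributions inside the Fourier transform must first be isolated and recognized as $-2\log|\zeta(\tfrac52+it)|=O(1)$ before any crude triangle-inequality bound, since taking absolute values too early destroys that cancellation. Finally, your heuristic that the truncated prime sum is dominated by $n\sim e^{2\pi\delta}$ holds only in the intermediate regime; when $(1-\alpha)\log\log t=O(1)$ the integral $\int_2^x t^{-\alpha}(\log t)^{-1}\,dt\sim\log\log x$ has its mass spread logarithmically with no endpoint dominance, which is precisely the source of the separate middle case in the theorem. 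So the scaffolding is right and matches the paper, but the analytic heart of the proof --- Lemmas~\ref{lem:initialIneq} and~\ref{lem:boundSumOverk} together with the casewise integral asymptotics in the Appendix --- remains to be supplied.
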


\begin{theorem}[Lower Bound]\label{thm:lowerbound} 
Assume RH. For large real numbers $t$, we have
\begin{equation*}
\log |\zeta(\alpha + it)| \geq 
\left\{ \begin{array}{l}
 \log \left( 1 -  (\log t)^{1 - 2\alpha }\right) \frac{\log t}{2\log \log t} - O\left( \frac{(\log t)^{2-2\alpha}}{(\log \log t)^2(1 - (\log t)^{1 - 2\alpha})} \right),\\
\\
\ \ \ \ \ \ \ \ \ \ \ \ \ \ \ \ \ \ \ \ \ \ \ \ \ \ \ \ \ \ \ \ \ \ \ \ \ {\rm if}\,\,\, (\alpha - 1/2)\log \log t = O(1); \\
\\
  - \log (\log \log t) - O(1),   \ \ \ \ \ \ \ \ \ \, {\rm if}\,\,\, (1- \alpha)\log \log t = O(1); \\
\\
 - \left( \frac{1}{2} + \frac{2\alpha - 1}{\alpha (1 - \alpha)} \right)\frac{(\log t)^{2 - 2\alpha}}{\log \log t} - \log (2\log \log t) - O\left(\frac{(\log t)^{2 - 2\alpha}}{(1-\alpha)^2(\log \log t)^2}\right), \\
  \ \ \ \ \ \ \ \ \ \ \ \ \ \ \ \ \ \ \ \ \ \ \ \ \ \ \ \ \ \ \ \ \ \ \ \ \ \ \ \ \ \ \ \ \ \ \ \ \ \ \ \ \ \ \ \ \ \ \ \ \ \ \ \ \ {\rm otherwise}.
\end{array}\right. 
\end{equation*}
\end{theorem}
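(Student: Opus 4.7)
The plan is to run the argument of Theorem \ref{thm:upperbound} with the roles of the minorant and majorant swapped. The starting point is the standard identity, obtained by integrating $-\zeta'/\zeta$ from a point on $\mathrm{Re}(s)=\tfrac52$ down to $\mathrm{Re}(s)=\alpha$ and taking real parts, of the schematic form
\begin{equation*}
\log|\zeta(\alpha+it)| \;=\; A(\alpha,t) \;-\; \tfrac{1}{2}\sum_{\gamma} f_{\alpha}(t-\gamma),
\end{equation*}
where $\gamma$ runs over the ordinates of the non-trivial zeros (real under RH) and $A(\alpha,t)$ is an explicit smooth contribution coming from $\log\zeta(\tfrac52+it)$ together with the archimedean (log-gamma) pieces. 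The denominator $4+(t-\gamma)^2$ in $f_{\alpha}$ is exactly what arises from $|(5/2-1/2)+i(t-\gamma)|^2$ at the reference line. Obtaining a lower bound for $\log|\zeta(\alpha+it)|$ thus amounts to obtaining an \emph{upper} bound for the zero sum, which is precisely the setting in which the extremal majorant from \cite{CLV}---available because $\alpha>\tfrac12$---applies.

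Let $M_{\alpha,\Delta}$ denote that extremal majorant of $f_{\alpha}$ of exponential type $2\pi\Delta$. The pointwise inequality $M_{\alpha,\Delta}\geq f_{\alpha}$ immediately gives $\sum_{\gamma}f_{\alpha}(t-\gamma)\le\sum_{\gamma}M_{\alpha,\Delta}(t-\gamma)$; since $\widehat{M}_{\alpha,\Delta}$ is supported in $[-\Delta,\Delta]$, the Guinand--Weil explicit formula converts the right-hand side into an archimedean integral involving $\mathrm{Re}(\Gamma'/\Gamma)$ plus the finite prime sum
\begin{equation*}
2\sum_{n\leq e^{2\pi\Delta}}\frac{\Lambda(n)}{\sqrt n}\,\widehat{M}_{\alpha,\Delta}\!\Bigl(\tfrac{\log n}{2\pi}\Bigr)\cos(t\log n).
\end{equation*}
The closed-form evaluation of $\int(M_{\alpha,\Delta}-f_{\alpha})$ from \cite{CLV}, combined with Stirling, makes the archimedean piece cancel against $A(\alpha,t)$ up to controllable error; the prime sum is estimated from the explicit Fourier-transform formula for $\widehat{M}_{\alpha,\Delta}$ in \cite{CLV}---whose relevant decay is $\sim e^{-2\pi(\alpha-1/2)|\xi|}$---together with Mertens-type estimates on $\sum_{n\leq x}\Lambda(n)\,n^{-\alpha}$.

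The three cases of the theorem emerge from optimizing the free parameter $\Delta$. Writing $2\pi\Delta=\theta\log t$, the archimedean contribution scales essentially linearly in $\Delta$ whereas the prime sum is of order roughly $t^{(1-2\alpha)\theta/2}$ (up to polylog factors). Balancing the two forces $\theta\asymp 1/\log\log t$ in the generic regime and produces the $(\log t)^{2-2\alpha}/\log\log t$ leading term with the factor $\tfrac12+\tfrac{2\alpha-1}{\alpha(1-\alpha)}$ coming directly from the $L^1$-defect formula of \cite{CLV}. In the near-critical regime $(\alpha-\tfrac12)\log\log t=O(1)$ the $\log t/\log\log t$ scale of \cite{CS} takes over, modified by the explicit prefactor $\log\!\bigl(1-(\log t)^{1-2\alpha}\bigr)$; in the regime $(1-\alpha)\log\log t=O(1)$ the prime sum becomes of size $\log\log t$ and dictates the $-\log\log\log t$ main term.

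The main technical obstacle will be tracking the implicit constants in the extremal identity of \cite{CLV} uniformly in $\alpha\in(\tfrac12,1]$. The two boundary regimes are delicate: as $\alpha\to\tfrac12^{+}$ the majorant develops an unbounded peak at the origin (since $f_{\alpha}(0)\to\infty$), so the $L^1$ discrepancy $\int(M_{\alpha,\Delta}-f_{\alpha})$ and the growth of $\widehat{M}_{\alpha,\Delta}(0)$ must be controlled with uniformity that is not available in \cite{CS} at $\alpha=1/2$; and as $\alpha\to 1^{-}$ the Fourier decay $e^{-2\pi(\alpha-1/2)|\xi|}$ weakens, so the prime sum ceases to be negligible and must be balanced against the archimedean term with a different choice of $\Delta$, generating the $\log\log\log t$ corrections visible in the statement.
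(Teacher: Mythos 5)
Your proposal has the same overall architecture as the paper's proof: start from the Hadamard/Stirling identity, replace $f_\alpha$ by the extremal majorant $m_\Delta$ of \cite{CLV}, apply the Guinand--Weil explicit formula, estimate the archimedean integral and the prime-power sum, and optimize in $\Delta$. The calibration you state, however, is incorrect and would break the computation. Writing $2\pi\Delta=\theta\log t$, the prime-power sum is of order $x^{1-\alpha}/\log x$ with $x=e^{2\pi\Delta}=t^{\theta}$, i.e.\ $t^{(1-\alpha)\theta}/(\theta\log t)$; the exponent is $(1-\alpha)\theta$, not $(1-2\alpha)\theta/2$ (which is \emph{negative} for $\alpha>1/2$). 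Balancing against the archimedean defect term $\frac{1}{2\pi\Delta}\log\bigl(\frac{1-e^{-(2\alpha-1)\pi\Delta}}{1-e^{-4\pi\Delta}}\bigr)\log\frac{t}{2}$ forces $\pi\Delta=\log\log t$, i.e.\ $\theta=2\log\log t/\log t$ and $x=(\log t)^{2}$. With your stated $\theta\asymp 1/\log\log t$ one would instead have $x=e^{\log t/\log\log t}$, so the prime sum grows super-polylogarithmically and the stated main term $(\log t)^{2-2\alpha}/\log\log t$ does not emerge. Relatedly, the paper does not vary $\Delta$ between the three regimes: all three cases come from the single choice $\pi\Delta=\log\log t$, the case split reflecting only the sizes of $(\alpha-\tfrac12)\log\log t$ and $(1-\alpha)\log\log t$. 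Finally, the archimedean integral does not simply cancel against $A(\alpha,t)$ up to controllable error: after the $(\tfrac52-\alpha)\log(t/2)$ parts cancel, the leftover defect term displayed above is precisely the dominant (negative) contribution in the near-critical regime $(\alpha-\tfrac12)\log\log t=O(1)$, and it, not the prime sum, produces the prefactor $\log\bigl(1-(\log t)^{1-2\alpha}\bigr)$ in that case.
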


Observe that when $\alpha \to 1/2$ in Theorem \ref{thm:upperbound} we recover the main term of the bound (\ref{Intro4}). Also it is worth mentioning that the order of magnitude in the general upper bound in Theorem \ref{thm:upperbound} is a classical result in the theory of the Riemann zeta-function (see for instance \cite[Theorem 14.5]{T}), and the novelty here is in fact the method with which we arrive at this upper bound and the explicit computation of the implied constant.

With a refined calculation we can find the constant term when $\alpha = 1$ and obtain Littlewood's result \cite{L2} and \cite{L3} for bounds at $\text{Re}(s)= 1.$ 

\begin{corollary} \label{cor:boundAt1} 
Assume RH. For large real numbers $t$, we have
\begin{equation*}\label{Intro6}
 |\zeta(1 + it)| \leq (2e^{\gamma} + o(1)) \log \log t,
\end{equation*}
and
\begin{equation*}\label{Intro7}
 \frac{1}{|\zeta(1 + it)|} \leq \left(\frac{12e^{\gamma}}{\pi^2} + o(1)\right)\log \log t,
\end{equation*}
where $\gamma$ is the Euler constant.
\end{corollary}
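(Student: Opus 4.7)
The approach is to specialize and refine the arguments of Theorems \ref{thm:upperbound} and \ref{thm:lowerbound} at $\alpha = 1$, which lies in the regime $(1 - \alpha)\log\log t = O(1)$. The main theorems already give $\log|\zeta(1+it)| = \pm \log\log\log t + O(1)$, so the entire content of the corollary is to pin down the $O(1)$ constant. The framework remains the same: one applies the Guinand--Weil explicit formula to the extremal majorant $M_1$ (resp.\ minorant $L_1$) of $f_1(x) = \log\frac{4 + x^2}{1/4 + x^2}$ of exponential type $2\pi\delta$, with $\delta = \delta(t)$ chosen so that $y := e^{2\pi\delta} \sim (\log t)^{2}$.

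At $\alpha = 1$ the Gamma-function and pole-at-$s=1$ contributions in the explicit formula are $O(1)$, the higher-prime-power terms ($k \geq 2$) are uniformly bounded, and the oscillatory factor $\cos(t\log n)$ is trivially majorized by $1$. The principal term is then $\sum_{p \leq y} 1/p$, which by Mertens' theorem equals $\log\log y + \gamma + o(1)$. Hence
\[
\log|\zeta(1+it)| \leq \log\log y + \gamma + o(1) = \log 2 + \log\log\log t + \gamma + o(1),
\]
and exponentiating yields $|\zeta(1+it)| \leq (2e^{\gamma} + o(1))\log\log t$. The analogous argument with $L_1$ produces
\[
-\log|\zeta(1+it)| \leq \log \prod_{p \leq y}\!\left(1 + \tfrac{1}{p}\right) + o(1),
\]
and the identity $\prod_p (1 + 1/p)(1 - 1/p) = \prod_p(1 - 1/p^2) = 1/\zeta(2) = 6/\pi^2$ converts this into $\prod_{p \leq y}(1 + 1/p) \sim (6e^{\gamma}/\pi^2)\log y = (12 e^{\gamma}/\pi^2)\log\log t \,(1+o(1))$, completing the lower bound.

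The main obstacle is tracking every implicit constant through the prime-power sum and verifying that the weights $\widehat{M_1}(\log n / 2\pi)$ and $\widehat{L_1}(\log n / 2\pi)$, given explicitly in \cite{CLV} at $\alpha=1$, are close enough to the indicator of $[0,y]$ that the principal term collapses to $\sum_{p \leq y} 1/p$ up to an $o(1)$ error. Because the target constants $2e^{\gamma}$ and $12 e^{\gamma}/\pi^2$ are sharp, any slackness in the Gamma terms, the pole at $s=1$, or the higher prime powers would spoil the bound; the delicate point is thus the asymptotic evaluation of $\widehat{M_1}, \widehat{L_1}$ near the endpoints of their support, combined with standard prime number estimates.
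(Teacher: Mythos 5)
Your framework is essentially the same as the paper's: specialize the explicit-formula argument of Sections 2 and 3 to $\alpha = 1$, take $e^{2\pi\Delta} = (\log t)^2$ (i.e.\ $\pi\Delta = \log\log t$), evaluate the prime sum with a Mertens-type asymptotic carrying an explicit constant, and for the lower bound exploit $\prod_p(1 - p^{-2}) = 1/\zeta(2) = 6/\pi^2$ exactly as the paper does in \S 3.2. So the route is the right one. There is, however, a concrete inconsistency in your handling of the prime powers that would spoil the sharp constant if followed literally. You discard the $p^k$ with $k \geq 2$ as uniformly bounded and then invoke Mertens to write $\sum_{p \leq y} 1/p = \log\log y + \gamma + o(1)$. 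But Mertens' second theorem gives $\sum_{p \leq y} 1/p = \log\log y + M + o(1)$ with $M$ the Meissel--Mertens constant, and $\gamma - M = \sum_p \sum_{k \geq 2} \tfrac{1}{k p^k} > 0$ is \emph{precisely} the higher-prime-power contribution you threw away. The statement that actually carries the constant $\gamma$ is for the $\Lambda$-weighted sum, $\sum_{n \leq y} \Lambda(n)/(n \log n) = \log\log y + \gamma + O(1/\log y)$, which the paper keeps intact in \S 2.2; absorbing the $k \geq 2$ terms into $o(1)$ and then claiming $\gamma$ would really give $2 e^{M}\log\log t$, the wrong constant.

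A secondary point you gloss over: the starting identity (\ref{eqn:logeqn}) has an $O(1)$ error, which is not good enough once the target is a bound with a specified leading constant and only an $o(1)$ remainder. The paper sharpens it to (\ref{eqn:upplogeqn}) by retaining $\log|\zeta(\tfrac52 + it)|$ and the $\log\pi$ factor exactly, and the former cancels against the matching $\log|\zeta(\tfrac52+it)|$ that falls out of the prime-power sum. Likewise, your remark that the Gamma-function terms are ``$O(1)$ at $\alpha=1$'' is only true after the cancellation with the Stirling contribution $\bigl(\tfrac54 - \tfrac{\alpha}{2}\bigr)\log\tfrac{t}{2}$ from the $\xi$-quotient; each piece alone is $\gg \log t$. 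Once these two refinements are put in (keep the full $\Lambda$-sum; replace (\ref{eqn:logeqn}) by (\ref{eqn:upplogeqn})), your outline lines up with the paper's proof.
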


The paper is divided in three sections plus an appendix. In Section 2 we prove the upper bounds for $\zeta(s)$ contained in Theorem \ref{thm:upperbound} and Corollary \ref{cor:boundAt1}. In Section 3 we prove the corresponding lower bounds for $\zeta(s)$ in Theorem \ref{thm:lowerbound} and Corollary \ref{cor:boundAt1}. In these two sections we will state the necessary facts concerning the extremal functions as supporting lemmas that will be ultimately proved in Section 4. The Appendix in the end details some of the asymptotic calculations carried along the proofs.

\section{Upper bound for $\zeta(s)$}
\subsection{Proof of Theorem \ref{thm:upperbound}} Let  
\begin{equation*}
 \xi(s) = s(1-s) \pi^{-s/2} \Gamma\left( \frac{s}{2}\right)\zeta(s)
\end{equation*}
be the Riemann's $\xi$-function. This function is an entire function of order 1 and satisfies the functional equation 
\begin{equation*}
 \xi(s) = \xi(1-s).
\end{equation*}
Hadamard's factorization formula gives us
\begin{equation*} \label{eqn:hadamard}
\xi(s) = e^{A+Bs} \prod_{\rho} \left(1-\frac{s}{\rho}\right)e^{s/\rho},
\end{equation*}
where $\rho = \tfrac 12 + i\gamma$ runs over the non-trivial zeros of $\zeta$, and from the functional equation we can show that $B= - \sum_{\rho} \text{Re} (1/\rho)$. On RH, $\gamma$ is real. 

By the functional equation and Hadamard's factorization formula, we obtain 
\begin{equation*}
\left|\frac{\xi(\alpha + it)}{\xi(5/2 - it)}\right| = \left|\frac{\xi(\alpha + it)}{\xi(-3/2 + it)}\right| = \prod_{\rho = 1/2 + i\gamma} \left( \frac{(\alpha - 1/2)^2 + (t-\gamma)^2}{4 + (t-\gamma)^2} \right)^{1/2}.
\end{equation*}
Recall Stirling's formula for the Gamma function \cite[Chapter 10]{Dav}
\begin{equation*}
\log \Gamma(z) = \frac{1}{2}\log 2\pi - z + \big(z - \tfrac 12\big)\log z + O\bigl(|z|^{-1}\bigr),
\end{equation*}
for large $|z|$. Using Stirling's formula and the fact that $|\zeta(5/2 - it)| \asymp 1$, we obtain 
\begin{equation} \label{eqn:logeqn}
\log |\zeta(\alpha + it)| = \left(\frac{5}{4} - \frac{\alpha}{2}\right) \log \frac t2 - \frac{1}{2}\sum_{\gamma} f_{\alpha}(t-\gamma) + O(1).
\end{equation}

The sum of $f_{\alpha}(t - \gamma)$ over the non-trivial zeros is hard to evaluate, so the key idea here is to replace $f_{\alpha}$ by its appropriate minorant (with a compactly supported Fourier transform) and then apply the following explicit formula which connects the zeros of the zeta-function and prime powers. The proof of the following lemma can be found in \cite[Theorem 5.12]{IK}.

\begin{lemma}[Explicit Formula] \label{lem:explicitformula}
Let $h(s)$ be analytic in the strip $|\text{\rm Im}(s)| \leq 1/2 + \epsilon$ for some $\epsilon > 0$, 
and such that $|h(s)| \ll (1+|s|)^{-(1+\delta)}$ for 
some $\delta >0$ when $|\text{\rm Re}(s)|\to \infty$.  Let $h(w)$ be real-valued for real $w$, and set $\widehat{h}(x) = \int_{-\infty}^{\infty} h(w) e^{-2\pi i x w} \> \dw$. 
Then 
\begin{align*}
\sum_{\rho} h(\gamma) = h\left(\frac{1}{2i} \right) &+h\left(-\frac{1}{2i}\right) 
- \frac{1}{2\pi} {\widehat h}(0) \log \pi  + \frac{1}{2 \pi}  \int_{-\infty}^{\infty} h(u) \,\text{\rm Re }\frac{\Gamma '}{\Gamma}\left(\frac{1}{4}   + \frac{iu}{2} \right) \> \du \\
& - \frac{1}{2\pi}\sum_{n=2}^{\infty} \frac{\Lambda(n)}{\sqrt{n}}\left(\widehat{h}\left( \frac{\log n}{2\pi }\right) + \widehat{h}\left( \frac{-\log n}{2\pi }\right)\right).
\end{align*}\end{lemma}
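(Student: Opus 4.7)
The plan is to derive the explicit formula by residue calculus on the logarithmic derivative $\xi'(s)/\xi(s)$ weighted against the test function $h$, combined with the functional equation $\xi(s) = \xi(1-s)$ and the Dirichlet series $\zeta'(s)/\zeta(s) = -\sum_{n \geq 2}\Lambda(n) n^{-s}$ valid on $\operatorname{Re}(s) > 1$. I would introduce the auxiliary function $k(s) := h((s - 1/2)/i)$, which is analytic in the strip $-\varepsilon \leq \operatorname{Re}(s) \leq 1 + \varepsilon$, satisfies $k(\rho) = h(\gamma)$ at each non-trivial zero $\rho = 1/2 + i\gamma$, and inherits the decay $|k(\sigma + it)| \ll (1 + |t|)^{-1-\delta}$ on vertical lines.

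Applying the residue theorem to $k(s)\,\xi'(s)/\xi(s)$ on the rectangle with vertices $c \pm iT$ and $1 - c \pm iT$ (with $c > 1$ and $T \to \infty$ through ordinates avoiding the zeros), the interior residues sum to $\sum_\rho h(\gamma)$, and the functional equation, which yields $\xi'(1-s)/\xi(1-s) = -\xi'(s)/\xi(s)$, folds the left vertical integral onto the right to produce
\begin{equation*}
2\pi i \sum_\rho h(\gamma) = \int_{(c)} \bigl[k(s) + k(1-s)\bigr] \frac{\xi'}{\xi}(s)\,\ds.
\end{equation*}
I would then substitute the logarithmic derivative of the factorization $\xi(s) = s(1-s)\pi^{-s/2}\Gamma(s/2)\zeta(s)$,
\begin{equation*}
\frac{\xi'}{\xi}(s) = \frac{1}{s} + \frac{1}{s-1} - \frac{\log\pi}{2} + \frac{1}{2}\frac{\Gamma'}{\Gamma}\!\left(\frac{s}{2}\right) + \frac{\zeta'}{\zeta}(s),
\end{equation*}
which splits the integral into five pieces, each of which I shift to $\operatorname{Re}(s) = 1/2$.

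The $1/s + 1/(s-1)$ piece produces the residue $k(1) + k(0) = h(1/(2i)) + h(-1/(2i))$ at $s = 1$, while the remaining integral on $(1/2)$ vanishes because $k(s) + k(1-s)$ is even and $1/s + 1/(s-1)$ is odd under $s \mapsto 1-s$. On the critical line I parametrize $s = 1/2 + iu$, noting $k(1/2 + iu) = h(u)$ and $k(1/2 - iu) = h(-u)$: the constant $-\log\pi/2$ yields $-\widehat{h}(0)\log\pi/(2\pi)$ after symmetrizing in $\pm u$; the digamma term combines the $\pm u$ contributions into $\operatorname{Re}\frac{\Gamma'}{\Gamma}\!\left(\frac{1}{4} + \frac{iu}{2}\right)$ via $\Gamma'(\overline{z})/\Gamma(\overline{z}) = \overline{\Gamma'(z)/\Gamma(z)}$; and the $\zeta'/\zeta$ piece, expanded as an absolutely convergent Dirichlet series on $(c)$ before the shift, produces the Fourier values $\widehat{h}(\pm \log n/(2\pi))$ since $n^{-s}|_{s = 1/2 + iu} = n^{-1/2} e^{-iu \log n}$ is, up to a factor of $i$, the integrand of the Fourier transform at frequency $\log n/(2\pi)$.

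The chief technical obstacle is analytic justification of the contour manipulations: one needs the classical bound $N(T+1) - N(T) \ll \log T$ to extract a sequence $T \to \infty$ on which $\xi'(\sigma + iT)/\xi(\sigma + iT) \ll \log^2 T$ uniformly in $1 - c \leq \sigma \leq c$ (so that the horizontal sides of the rectangle contribute negligibly), together with absolute convergence of the Dirichlet series on $(c)$ and the $L^1$ decay of $k$ on vertical lines to justify interchanging the Dirichlet sum with the integral when handling the $\zeta'/\zeta$ piece.
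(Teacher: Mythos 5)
The paper does not prove this lemma itself; it simply cites \cite[Theorem 5.12]{IK}, so there is no paper-internal proof to compare against. Evaluated on its own, your argument is the standard contour-integral derivation of the Guinand--Weil explicit formula, and it is essentially correct. The change of variable $k(s) = h((s-\tfrac12)/i)$ correctly transfers the strip of analyticity of $h$ to $-\epsilon \le \operatorname{Re} s \le 1 + \epsilon$ and matches $k(\rho) = h(\gamma)$; the folding via $\xi'(1-s)/\xi(1-s) = -\xi'(s)/\xi(s)$ is right; the decomposition of $\xi'/\xi$ into the five pieces is standard; the residue from $1/(s-1)$ at $s=1$ gives $k(1)+k(0) = h(1/(2i)) + h(-1/(2i))$, and you are correct that the remaining $1/s + 1/(s-1)$ integral on the critical line vanishes by the odd/even symmetry under $s \mapsto 1-s$; the digamma piece symmetrizes correctly into $\operatorname{Re}\,\Gamma'/\Gamma(1/4 + iu/2)$; and the Dirichlet-series expansion on $\operatorname{Re} s = c > 1$ followed by termwise shift (each $n^{-s}$ being entire, no residue is crossed) gives exactly the Fourier values $\widehat{h}(\pm \log n/(2\pi))$.

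One point you gloss over deserves a sentence: after shifting termwise to $\operatorname{Re} s = 1/2$, the resulting series $\sum_n \Lambda(n) n^{-1/2}\bigl(\widehat{h}(\log n/2\pi) + \widehat{h}(-\log n/2\pi)\bigr)$ must be shown to converge absolutely, and the crude bound $\widehat{h} \in L^\infty$ (from $h \in L^1$) is not enough since $\sum \Lambda(n) n^{-1/2}$ diverges. What saves you is exactly the strip-analyticity hypothesis: shifting the $w$-contour in $\widehat{h}(\xi) = \int h(w) e^{-2\pi i \xi w}\,\mathrm{d}w$ to $\operatorname{Im} w = \mp(1/2 + \epsilon')$ gives $\widehat{h}(\xi) \ll e^{-\pi(1+2\epsilon')|\xi|}$, hence $\widehat{h}(\pm\log n/2\pi) \ll n^{-1/2 - \epsilon'}$, which makes the sum converge with room to spare. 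You invoke the $L^1$ decay of $k$ on vertical lines only to justify the interchange on $\operatorname{Re} s = c$; the convergence of the final shifted sum needs the stronger exponential bound just described. This is a standard technicality and does not affect the correctness of your plan, but it is the one genuinely load-bearing estimate you did not name.
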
 

The properties of the minorant function that we are interested in are described in the next lemma, that shall be proved in Section 4.

\begin{lemma}[Extremal Minorant] \label{prop:mainprop} 
Let $\Delta$ denote a positive real number. There is a unique entire function $g_{\Delta}$ which satisfies the following properties:
\begin{itemize}
 \item[(i)] For all real $x$ we have 
 \begin{equation}\label{minlemma21}
- \frac{C}{1+x^2} \le g_{\Delta}(x) \le f_{\alpha}(x), 
\end{equation}
for some positive constant $C$. For any complex number $x+iy$ we have 
\begin{equation}\label{minlemma2}
 |g_{\Delta}(x+iy)| \ll  \frac{\Delta^2  }{1+ \Delta |x+iy|}e^{2\pi \Delta|y|}.
\end{equation}
 
\item[(ii)] The Fourier transform of $g_{\Delta}$, namely 
 \begin{equation*}
 {\hat g}_{\Delta}(\xi) = \int_{-\infty}^{\infty} g_{\Delta}(x) e^{-2\pi i x \xi }\, \dx,  
 \end{equation*}
is a continuous real valued function supported on the interval $[-\Delta, \Delta]$. For $0 \leq |\xi|\leq \Delta$ it is given by:
\begin{align*}\label{eqn:GDelta}
\begin{split}
\hat{g}_{\Delta}(\xi) &= \sum_{k = 0}^{\infty} (-1)^k  \left( \frac{k + 1}{|\xi| + k \Delta} \left( e^{-2\pi (|\xi| + k\Delta)(\alpha - 1/2)} -  e^{-4\pi (|\xi| + k\Delta)}\right) \right. \\ 
&\ \  \ \ \ \left. - \frac{k + 1}{\Delta (k + 2) - |\xi| } \left( e^{2\pi(|\xi|-(k + 2)\Delta)(\alpha - 1/2)} - e^{4\pi (|\xi| - (k + 2)\Delta)} \right)\right).
\end{split}
\end{align*}
In particular, if $\xi = 0,$ we have
\begin{equation*}\label{FTatzero}
\hat{g}_{\Delta}(0) =  2\pi \left( \frac 52 - \alpha \right) - \frac{2}{\Delta}\log \left(\frac{1 + e^{-(2\alpha - 1)\pi\Delta}}{1 + e^{-4\pi\Delta}}\right).
\end{equation*}

\item[(iii)] The $L^1$-distance between $g_{\Delta}$ and $f_{\alpha}$ equals to
\begin{equation*}
 \int_{-\infty}^{\infty} \{f_{\alpha}(x)-g_{\Delta}(x)\}\,\dx = \frac{2}{\Delta} \Big(\log \bigl(1 + e^{-(2\alpha - 1)\pi \Delta} \bigr) - \log \bigl(1 + e^{-4\pi \Delta}\bigr)\Big) .
\end{equation*}
\end{itemize}
 \end{lemma}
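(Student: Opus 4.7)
The plan is to derive Lemma~\ref{prop:mainprop} by specializing the Gaussian-subordination extremal framework of Carneiro--Littmann--Vaaler~\cite{CLV} to $f_\alpha$. The starting point is the observation that $f_\alpha$ admits two equivalent positive-superposition representations:
\begin{equation*}
f_\alpha(x) \;=\; \int_{\alpha - 1/2}^{2}\frac{2\lambda}{\lambda^2+x^2}\,\dl \;=\; \int_0^{\infty}\frac{e^{-\pi(\alpha-1/2)^2 s}-e^{-4\pi s}}{s}\,e^{-\pi x^2 s}\,ds,
\end{equation*}
the first obtained from $\partial_\lambda\log(\lambda^2+x^2)$ and the second from Frullani's identity. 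The Gaussian form places $f_\alpha$ directly into the class handled by~\cite{CLV}: their main theorem then produces an entire extremal minorant $g_\Delta$ of exponential type $2\pi\Delta$ as a positive integral of individual Gaussian minorants against the subordinating measure, and simultaneously delivers uniqueness. The pointwise minorant inequality $g_\Delta\le f_\alpha$, the lower bound $-C/(1+x^2)$, and the Paley--Wiener growth estimate~\eqref{minlemma2} all inherit from uniform-in-$s$ estimates for the individual Gaussian minorants, while $\hat g_\Delta$ inherits its support in $[-\Delta,\Delta]$ by term-by-term Fourier transformation. This settles part~(i).

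For part~(ii), the cleanest route to the explicit series is via the Poisson representation. I would write $g_\Delta(x) = \int_{\alpha-1/2}^{2} 2\, m_{\lambda,\Delta}(x)\,\dl$ with $m_{\lambda,\Delta}$ denoting the Beurling--Selberg-type minorant for the Poisson kernel $\lambda/(\lambda^2+x^2)$, whose Fourier transform admits the interpolation expansion
\begin{equation*}
\hat{m}_{\lambda,\Delta}(\xi) \;=\; \pi\sum_{k=0}^{\infty}(-1)^k(k+1)\Bigl(e^{-2\pi\lambda(|\xi|+k\Delta)}-e^{-2\pi\lambda((k+2)\Delta-|\xi|)}\Bigr)
\end{equation*}
on $[-\Delta,\Delta]$, with the $(k+1)$ coefficient arising from second differences on the lattice $(1/\Delta)\Z$. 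Interchanging the $\lambda$-integral with this series and evaluating
\begin{equation*}
\int_{\alpha-1/2}^{2} 2\pi\, e^{-2\pi\lambda s}\,\dl \;=\; \frac{e^{-2\pi(\alpha-1/2)s}-e^{-4\pi s}}{s}
\end{equation*}
at $s=|\xi|+k\Delta$ and $s=(k+2)\Delta-|\xi|$ would produce exactly the series displayed in~(ii). Specializing to $\xi=0$, the $k=0$ summand contributes $2\pi(5/2-\alpha)$ via $\lim_{s\to 0^+}s^{-1}(e^{-2\pi(\alpha-1/2)s}-e^{-4\pi s}) = 2\pi(5/2-\alpha)$, and after reindexing by the common exponent $n\ge 1$ the remaining contributions telescope to coefficients $(-1)^n 2/(n\Delta)$; the identity $\sum_{n\ge 1}(-1)^n y^n/n = -\log(1+y)$ then yields the closed form for $\hat g_\Delta(0)$. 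Part~(iii) follows immediately from $\int(f_\alpha-g_\Delta)\,\dx = \hat f_\alpha(0^+) - \hat g_\Delta(0)$ together with the elementary evaluation $\int f_\alpha\,\dx = 2\pi(5/2-\alpha)$ (Fubini applied to the Poisson form).

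The main technical obstacle is the combinatorial bookkeeping in part~(ii): one must identify the correct Selberg--Vaaler interpolation data for the Poisson-kernel minorant, justify the interchange of the $\lambda$-integral with an infinite series whose terms have weak boundary behavior as $|\xi|\to\Delta^-$, and verify that the $(k+1)$-weighting emerges correctly after regrouping shifted translates. By contrast, uniqueness of $g_\Delta$ among minorants of exponential type $2\pi\Delta$ is a direct consequence of the general extremality/interpolation principle of~\cite{CLV}, once the correct interpolation data has been identified.
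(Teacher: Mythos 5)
Your strategy lands in the same CLV framework the paper uses, but the route you take for part (ii) is genuinely different. The paper works with the Gaussian representation directly: it applies CLV Theorem 4 to express $\hat G_\Delta(y)$ as a $\lambda$-integral against theta-type sums, then evaluates those sums by Poisson summation and contour integration around the poles at $\pm ia$, $\pm ib$. You instead decompose $f_\alpha(x) = \int_{\alpha-1/2}^{2}\frac{2\lambda}{\lambda^2+x^2}\,\dl$ as a nonnegative superposition of Poisson kernels, invoke an interpolation-series formula for the Fourier transform $\hat m_{\lambda,\Delta}$ of the Poisson-kernel minorant, and integrate in $\lambda$. That the resulting function coincides with the CLV minorant of $f_\alpha$ does hold — the Poisson kernel is itself a Gaussian subordination, $\lambda/(\lambda^2+x^2) = \pi\lambda\int_0^{\infty}e^{-\pi\lambda^2 s}e^{-\pi x^2 s}\,ds$, so Fubini identifies your construction with the paper's — but this is precisely the step you assert rather than argue, and it is the load-bearing one. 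Your $\xi=0$ check is consistent: $\hat m_{\lambda,\Delta}(0)=\pi(1-q)/(1+q)$ with $q=e^{-2\pi\lambda\Delta}$, and the $\lambda$-integral indeed reproduces $2\pi(\tfrac52-\alpha)-\frac{2}{\Delta}\log\frac{1+e^{-(2\alpha-1)\pi\Delta}}{1+e^{-4\pi\Delta}}$. The trade-off is that your route outsources the hardest computation, the alternating $(k+1)$-weighted series for $\hat m_{\lambda,\Delta}$, to a prior fact you would have to derive or cite, whereas the paper obtains its series from scratch given CLV Theorem 4 and so is self-contained. Two smaller divergences: you deduce (iii) from $\hat f_\alpha(0)-\hat g_\Delta(0)$, while the paper simply cites CLV Corollary 17, Example 3; and for the growth bound (\ref{minlemma2}) you appeal to uniform-in-$s$ estimates for the Gaussian minorants rather than the paper's route via the interpolation series representation of $G_\Delta$ together with the elementary bounds (\ref{relationforf}) — your alternative is plausible but would need to be written out.
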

 
From (\ref{eqn:logeqn}) and (i) of Lemma \ref{prop:mainprop} we obtain, for any $\Delta >0$,
\begin{equation}\label{Final1}
\log |\zeta(\alpha + it)| \leq  \left(\frac{5}{4} - \frac{\alpha}{2}\right) \log \frac t2 - \frac{1}{2}\sum_{\gamma} g_{\Delta}(t-\gamma) + O(1).
\end{equation}
To bound the sum of $g_{\Delta}(t - \gamma)$ we let $h(z) = g_{\Delta}(t - z)$ and apply Lemma \ref{lem:explicitformula} to get (observe that the growth condition  $|h(s)| \ll (1+|s|)^{-(1+\delta)}$ for some $\delta >0$ can be derived from (\ref{minlemma21}), or alternatively, directly from (\ref{eqn:sumofGDelta1}) and (\ref{relationforf}))

\begin{align}\label{expformulag}
\begin{split}
\sum_{\rho} g_{\Delta}(t-\gamma) =\Bigl\{ g_{\Delta}\Big(t&- \frac{1}{2i} \Big) +g_{\Delta}\Big(t+\frac{1}{2i}\Big) \Bigr\}
- \frac{1}{2\pi} {\widehat g}_{\Delta}(0) \log \pi  \\
&+ \frac{1}{2 \pi}  \int_{-\infty}^{\infty} g_{\Delta}(x) \,\text{\rm Re }\frac{\Gamma '}{\Gamma}\left(\frac{1}{4}   + \frac{i(t-x)}{2} \right)  \dx \\
& \ \ \ \ \ \ - \frac{1}{2\pi}\sum_{n=2}^{\infty} \frac{\Lambda(n)}{\sqrt{n}}\widehat{g}_{\Delta}\Big( \frac{\log n}{2\pi }\Big)\big( e^{-it \log n} + e^{it \log n}\big)
\end{split}
\end{align}
We now proceed to the asymptotic analysis of each of the elements on the right hand side of the expression (\ref{expformulag}).

\subsubsection{First term} From (i) of Lemma \ref{prop:mainprop} we get
\begin{equation}\label{firstterm}
\Bigl| g_{\Delta}\Big(t- \frac{1}{2i} \Big) +g_{\Delta}\Big(t+\frac{1}{2i}\Big) \Bigr| \ll \Delta^2 \frac{e^{\pi \Delta}}{1 + \Delta t}.
\end{equation}

\subsubsection{Second term} From (ii) of Lemma \ref{prop:mainprop} we get 
\begin{equation}\label{secondterm}
\frac{1}{2\pi} \widehat{g}_{\Delta}(0) \log \pi = \left( \frac 52 - \alpha \right)\log \pi - \frac{\log \pi}{\pi\Delta}\log \left(\frac{1 + e^{-(2\alpha - 1)\pi\Delta}}{1 + e^{-4\pi\Delta}}\right).
\end{equation} 

\subsubsection{Third term} We will now show that 
\begin{align}\label{tabound}
\begin{split}
\frac{1}{2 \pi}  \int_{-\infty}^{\infty} & g_{\Delta}(x) \,\text{\rm Re }\frac{\Gamma '}{\Gamma}\left(\frac{1}{4}   + \frac{i(t-x)}{2} \right) \dx\\
&= \left( \frac{5}{2} - \alpha \right) \log \frac{t}{2} - \frac{1}{\pi \Delta} \log \left(\frac{1 + e^{-(2\alpha - 1)\pi\Delta}}{1 + e^{-4\pi\Delta}}\right)\log\frac{t}{2} \\
& \ \ \ \ \ \ \ \ \ \ \ \ \ \ \ \ \ \ \ \ \ \ \ \ \ \ \ \ \ \ \ \ \ \ \ \ \ \ + O\left(\frac{\Delta \log(1 + \sqrt{t}\Delta)}{\sqrt{t}}\right).
\end{split}
\end{align}
From (i) of Lemma \ref{prop:mainprop}, for $x \neq 0$, we get
\begin{equation*}
- \frac{C}{1+x^2} \le g_{\Delta}(x) \le f_{\alpha}(x) \leq \frac{4}{x^2},
\end{equation*}
and hence 
\begin{equation}\label{gmin}
|g_{\Delta}(x)| \ll \min \left\{\frac{1}{x^2}, \frac{\Delta^2}{1+ \Delta |x|}\right\}.
\end{equation}
Since $\text{\rm Re }\tfrac{\Gamma '}{\Gamma}(\tfrac{1}{4}   + iu) \ll \log(|u| +2)$, we see that for sufficiently large $t$,
\begin{equation}\label{tabound1}
\int_{4\sqrt{t}}^{\infty} g_{\Delta}(x) \,\text{\rm Re }\frac{\Gamma '}{\Gamma}\left(\frac{1}{4}   + \frac{i(t-x)}{2} \right) \dx \ll \int_{4\sqrt{t}}^{\infty}\frac{\log (x+2)}{x^2}\, \dx \ll \frac{\log t}{\sqrt{t}}.
\end{equation}
By similar arguments, 
\begin{equation}\label{tabound2}
\int^{-4\sqrt{t}}_{-\infty} g_{\Delta}(x) \, \text{\rm Re }\frac{\Gamma '}{\Gamma}\left( \frac{1}{4} + \frac{i(t-x)}{2} \right) \dx\ll \frac{\log t}{\sqrt{t}}.
\end{equation}
Finally, we use that
\begin{equation*}
 \frac{\Gamma'(s)}{\Gamma(s)}=\log s + O\bigl(|s|^{-1}\bigr)
\end{equation*}
for large $s$, together with (\ref{gmin}), part (iii) of Lemma \ref{prop:mainprop}, and the fact that 
$\int_{-\infty}^{\infty} f_\alpha(x)\,\dx = 2\pi \left( \frac{5}{2} - \alpha \right) $, to get

\begin{align}\label{tabound3}
\begin{split}
\int_{-4\sqrt{t}}^{4\sqrt{t}} g_{\Delta}(x) &\,\text{\rm Re }\frac{\Gamma '}{\Gamma}\left( \frac{1}{4} + \frac{i(t-x)}{2} \right)\dx \\
& = \int_{-4\sqrt{t}}^{4\sqrt{t}} g_{\Delta}(x) \left( \log \frac t2 + O\left(\frac{1}{\sqrt t}\right)\right) \dx\\
&= \log \frac t2 \int_{-\infty}^{\infty} g_{\Delta}(x)\, \dx  + O\left( \frac{\Delta \log (1 + \sqrt{t}\Delta)}{\sqrt{t}} \right)\\
&= 2\pi \left( \frac{5}{2} - \alpha \right) \log \frac t2 - \frac{2}{\Delta}\log \left( \frac{1 + e^{-(2\alpha - 1)\pi \Delta}}{1 + e^{-4\pi\Delta}}\right) \log \frac t2 \\
& \ \ \ \ \ \ \ \ \ \ \ \ \ \ \ \ \ \ \ \ \ \ \ \ \ \ \ \ \ \ \ \ \ \ \ \ \ \ \ \ \ \ \ \ + O\left( \frac{\Delta \log (1 + \sqrt{t}\Delta)}{\sqrt{t}} \right).
\end{split}
\end{align}
Combining (\ref{tabound1}), (\ref{tabound2}) and (\ref{tabound3}) we arrive at (\ref{tabound}).

\subsubsection{Fourth term (sum over prime powers)} This is the hardest term to analyze. We will have to make use of the explicit expression for Fourier transform of $g_{\Delta}$ described in (ii) of Lemma \ref{prop:mainprop} to get
\begin{align} \label{fabound}
\begin{split}
& \frac{1}{2\pi} \sum_{n = 2}^{\infty} \frac{\Lambda(n)}{\sqrt{n}} \hat{g}_{\Delta} \left( \frac{\log n}{2\pi} \right)\left( e^{-i t\log n} + e^{it \log n }\right) \\
&\\
&= \sum_{n \leq e^{2\pi \Delta}} \frac{\Lambda(n)}{\sqrt{n}} \sum_{k = 0}^{\infty} \left(\frac{k + 1}{\log n + 2\pi k \Delta}  \frac{e^{-(2\alpha - 1)\pi k\Delta}}{n^{\alpha - 1/2}} \right.\\
&\ \ \ \ \ \ \ \ \   \left.  - \frac{k + 1}{(2\pi\Delta (k + 2) - \log n)}  \frac{n^{\alpha - 1/2}}{e^{(2\alpha - 1)\pi (k + 2) \Delta}} \right) (-1)^k\left( e^{-i t\log n} + e^{it \log n }\right) \\
&\ \ \ \ \ \ \ \ \ \ \ \ \ \ \ \ \ \ \ \ \ \ \ \ \ \ \ \ \ \ \ \ \ \ \ \ \ \ \ \ \ \ \ \ \ \ - 2 {\rm Re} \sum_{n \leq e^{2\pi \Delta}} \frac{\Lambda(n)}{n^{5/2 + it}\log n} + O\bigl(e^{- 3\pi \Delta}\bigr) \\
&\\
&= \sum_{n \leq e^{2\pi \Delta}} \frac{\Lambda(n)}{\sqrt{n}} \sum_{k = 0}^{\infty} \left(\frac{k + 1}{\log n + 2\pi k \Delta}  \frac{e^{-(2\alpha - 1)\pi k\Delta}}{n^{\alpha - 1/2}} \right.\\
&\ \ \ \ \ \ \ \ \  \left.  - \frac{k + 1}{(2\pi\Delta (k + 2) - \log n)}  \frac{n^{\alpha - 1/2}}{e^{(2\alpha - 1)\pi (k + 2) \Delta}} \right) (-1)^k\left( e^{-i t\log n} + e^{it \log n }\right) \\
&\ \ \ \ \ \ \ \ \ \ \ \ \ \ \ \ \ \ \ \ \ \ \ \ \ \ \ \ \ \ \ \ \ \ \ \ \ \ \ \ \ \ \ \ \ \ - 2 \log \left|\zeta \left(\frac 52 + it\right)\right| + O\bigl(e^{- 3\pi \Delta}\bigr). 
\end{split}
\end{align}
From now on we let $x = e^{2\pi \Delta}.$ 
Since $\frac{1}{\log y} \frac{1}{y^{\alpha - 1/2}}$ is a non-increasing function for $y >1$, we deduce that for all integers $k \geq 0$ (recall that $n \leq x$), 
\begin{equation} \label{ineq:twotermdifferent}
\frac{1}{\log nx^k}  \frac{1}{(nx^k)^{\alpha - 1/2}}  - \frac{1}{(\log \tfrac{x^{k + 2}}{n})}  \frac{1}{\big(\tfrac{x^{k + 2}}{n}\big)^{\alpha - 1/2}} \geq 0.
\end{equation}

The following two lemmas will be used to bound the sum over prime powers above.
\begin{lemma} \label{lem:initialIneq} For all $k \geq 0$ and $n \leq x,$ 
\begin{align*} 
 (k+ 1) &\left( \frac{1}{\log n x^k} \frac{1}{(nx^k)^{\alpha - 1/2}} - \frac{1}{\log \tfrac{x^{k + 2}}{n}}  \frac{1}{\big( \tfrac{x^{k + 2}}{n}\big)^{\alpha - 1/2}} \right) \\
&\geq (k+ 2) \left( \frac{1}{\log n x^{k +1}} \frac{1}{(nx^{k + 1})^{\alpha - 1/2}} - \frac{1}{\log \tfrac{x^{k + 3}}{n}}  \frac{1}{\big( \tfrac{x^{k + 3}}{n}\big)^{\alpha - 1/2}} \right).
\end{align*}
\end{lemma}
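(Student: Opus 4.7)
The plan is to reduce the claimed inequality to the monotonicity of a single one-variable auxiliary function, and then prove that monotonicity by splitting the function into two manifestly decreasing pieces.

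First I would introduce $\beta = \alpha - 1/2 > 0$, $u = \log n$, $v = \log x$ (so $0 < u \leq v$), and the auxiliary function $G(w) := e^{-\beta w}/w$ for $w > 0$, so that $1/(y^{\alpha - 1/2}\log y) = G(\log y)$. The $k$-th bracket in the lemma then becomes $G(u + kv) - G((k+2)v - u)$, and the crucial observation is that the two arguments $u + kv$ and $(k+2)v - u$ are symmetric about the midpoint $(k+1)v$ with common half-width $r := v - u \geq 0$. Setting $y = (k+1)v$ and writing $k+1 = y/v$, the desired inequality reduces (after clearing the positive factor $1/v$) to $\Phi(y) \geq \Phi(y+v)$, where $\Phi(y) := y\,[G(y - r) - G(y + r)]$. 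Thus it suffices to prove that $\Phi$ is non-increasing on $(r, \infty)$.

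Next, using the identities $y/(y - r) = 1 + r/(y - r)$ and $y/(y + r) = 1 - r/(y + r)$, I would rewrite $\Phi(y) = [e^{-\beta(y - r)} - e^{-\beta(y + r)}] + r\,[G(y - r) + G(y + r)] = 2\,e^{-\beta y}\sinh(\beta r) + r\,[G(y - r) + G(y + r)]$. Since $\beta > 0$, the first summand is strictly decreasing in $y$. For the second, a direct computation gives $G'(w) = -e^{-\beta w}(\beta w + 1)/w^2 < 0$ on $(0, \infty)$, so both $y \mapsto G(y - r)$ and $y \mapsto G(y + r)$ are decreasing on $(r, \infty)$. Therefore $\Phi$ is a sum of two non-increasing functions, hence itself non-increasing. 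Applying this at $y = (k+1)v$ and $y + v = (k+2)v$ yields the lemma, and the domain condition $y - r = u + kv > 0$ needed for the computation is satisfied because $n \geq 2$ in the prime-power sum forces $u = \log n > 0$.

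There is no real obstacle in this outline; the entire content of the argument is spotting that the two arguments inside the $k$-th bracket are symmetric about $(k+1)v$, which converts the discrete comparison between $k$ and $k + 1$ into plain monotonicity of a smooth one-variable function that decomposes transparently into decreasing pieces.
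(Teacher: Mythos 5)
Your proposal is correct, and it takes a genuinely different route from the paper. The paper divides the inequality through by $x^{-(\alpha-1/2)k}$, uses the nonnegativity of the bracketed difference (inequality (\ref{ineq:twotermdifferent})) together with $x^{-(\alpha-1/2)}\le1$ to drop the extra $x^{-(\alpha-1/2)}$ factor, and then regroups so the reduced claim splits as
\[
\frac{1}{n^{\alpha-1/2}}\Big(\frac{k+1}{k\log x+\log n}-\frac{k+2}{(k+1)\log x+\log n}\Big)\ \geq\ \frac{n^{\alpha-1/2}}{x^{2\alpha-1}}\Big(\frac{k+1}{(k+2)\log x-\log n}-\frac{k+2}{(k+3)\log x-\log n}\Big),
\]
which is proved simply by computing both differences and checking the left side is $\geq 0$ while the right side is $\leq 0$. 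You instead spot that the two logarithms $u+kv$ and $(k+2)v-u$ are symmetric about $(k+1)v$ with half-width $r=v-u$, which turns the $k$-vs-$(k+1)$ comparison into the monotonicity of $\Phi(y)=y\,[G(y-r)-G(y+r)]$ with $G(w)=e^{-\beta w}/w$, and you verify that monotonicity via the clean algebraic split $\Phi(y)=2e^{-\beta y}\sinh(\beta r)+r\,[G(y-r)+G(y+r)]$ into two manifestly non-increasing pieces. Your route is more structural and self-contained (it does not need the auxiliary positivity (\ref{ineq:twotermdifferent}) as an input, and it explains \emph{why} the inequality holds, via midpoint symmetry plus monotonicity of $G$), whereas the paper's direct algebraic manipulation is shorter to typeset but more tied to the particular shape of the expression. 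Both arguments are valid; the one minor stylistic quibble in yours is that the first summand $2e^{-\beta y}\sinh(\beta r)$ is only non-increasing, not strictly decreasing, in the degenerate case $r=0$ (i.e.\ $n=x$), but non-increasing is all that is used, and in that case $\Phi\equiv0$ anyway.
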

\begin{proof}
The above inequality is equivalent to
\begin{align*} 
 \frac{k + 1}{x^{(\alpha - 1/2)k}} &\left( \frac{1}{n^{\alpha - 1/2}{\log n x^k}} - \frac{n^{\alpha - 1/2}}{x^{2\alpha - 1} \log \tfrac{x^{k + 2}}{n}} \right) \\
& \geq \frac{k + 2}{x^{(\alpha - 1/2)(k + 1)}} \left( \frac{1}{n^{\alpha - 1/2}{\log n x^{k + 1}}} - \frac{n^{\alpha - 1/2}}{x^{2\alpha - 1} \log \tfrac{x^{k + 3}}{n}} \right).
\end{align*}
Since $\frac{1}{x^{\alpha - 1/2}} \leq 1,$ it suffices to show that
\begin{align*}
\frac{1}{n^{\alpha - 1/2}}& \left( \frac{k + 1}{k\log x + \log n } - \frac{k + 2}{(k + 1)\log x + \log n } \right)\\ &\geq  \frac{n^{\alpha - 1/2}}{x^{2\alpha - 1}}\left( \frac{k + 1}{(k + 2)\log x - \log n } - \frac{k + 2}{(k + 3)\log x - \log n } \right).
\end{align*}
The above is true since 
\begin{align*}
 \frac{k + 1}{k\log x + \log n }& - \frac{k + 2}{(k + 1)\log x + \log n } \\
& = \frac{\log x - \log n}{(k\log x + \log n)((k + 1)\log x + \log n)} \geq 0,
\end{align*}
while 
\begin{align*}
\frac{k + 1}{(k + 2)\log x - \log n }& - \frac{k + 2}{(k + 3)\log x - \log n} \\
& = \frac{\log n - \log x}{((k + 2)\log x - \log n)((k + 3)\log x - \log n)} \leq 0. 
\end{align*}
\end{proof}

\begin{lemma} \label{lem:boundSumOverk}
For all $k \geq 1$ and positive real numbers $2 \leq n \leq x,$ 
\begin{equation*} \label{ineq:boundFirstSum}
\frac{1}{\log x} - \frac{1}{x^{\alpha - 1/2}\log x} \leq \frac{k + 1}{k \log x + \log n} - \frac{k + 2}{x^{\alpha - 1/2}((k + 1) \log x + \log n)},  
\end{equation*}
and
\begin{equation*} \label{ineq:boundSecondSum}
\frac{1}{\log x} - \frac{1}{x^{\alpha - 1/2}\log x} \geq 
\frac{k + 1}{(k + 2)\log x - \log n} - \frac{k + 2}{x^{\alpha - 1/2}((k + 3) \log x - \log n)}.
\end{equation*}
\end{lemma}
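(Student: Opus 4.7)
The plan is to observe that each inequality becomes transparent after the substitution $L = \log x$, $\ell = \log n$, and $\beta = x^{\alpha - 1/2}$. The hypothesis $2 \leq n \leq x$ combined with the standing assumption $\alpha > 1/2$ gives $0 < \ell \leq L$ and $\beta \geq 1$. The key step is to pair each $n$-dependent fraction on the right-hand side of the claimed inequality with a suitable multiple of $1/L$, so as to expose a common factor of $L - \ell$.

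For the first inequality, I rewrite
\[
\text{RHS} - \text{LHS} = \left(\frac{k+1}{kL + \ell} - \frac{1}{L}\right) - \frac{1}{\beta}\left(\frac{k+2}{(k+1)L + \ell} - \frac{1}{L}\right).
\]
A direct computation gives
\[
\frac{k+1}{kL + \ell} - \frac{1}{L} = \frac{L - \ell}{L(kL + \ell)}, \qquad \frac{k+2}{(k+1)L + \ell} - \frac{1}{L} = \frac{L - \ell}{L((k+1)L + \ell)},
\]
and factoring out $(L - \ell)/L$ yields
\[
\text{RHS} - \text{LHS} = \frac{L - \ell}{L}\left(\frac{1}{kL + \ell} - \frac{1}{\beta((k+1)L + \ell)}\right).
\]
The first factor is non-negative since $\ell \leq L$, and the second is non-negative since $\beta \geq 1$ together with $(k+1)L + \ell \geq kL + \ell$.

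The second inequality is handled by an analogous pairing. After splitting the right-hand side to match $1/L$ with $1/(\beta L)$ and computing the same type of telescoping differences, one arrives at
\[
\text{LHS} - \text{RHS} = \frac{L - \ell}{L}\left(\frac{1}{(k+2)L - \ell} - \frac{1}{\beta((k+3)L - \ell)}\right),
\]
which is non-negative for exactly the same two reasons. I do not anticipate any real obstacle; the whole argument is a short algebraic identity, and the only thing to notice is the grouping that exposes the common factor $L - \ell$. Once that is done, the signs are forced by $\beta \geq 1$ and $n \leq x$.
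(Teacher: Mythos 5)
Your proof is correct and follows essentially the same route as the paper's: both rearrange so that the factor $1/x^{\alpha-1/2}$ sits in front of one telescoping difference, compute those differences to expose the common factor $\log x - \log n$, and conclude from $x^{\alpha-1/2}\geq 1$ together with the monotonicity of the denominators. The only cosmetic difference is that you write both inequalities out explicitly in the $(L,\ell,\beta)$ notation, whereas the paper proves the first and dismisses the second as similar.
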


\begin{proof} We will only show the proof for one inequality. The proof of the other is quite similar. Let us show that
\begin{equation*}
\frac{1}{\log x} - \frac{1}{x^{\alpha - 1/2}\log x} \leq \frac{k + 1}{k \log x + \log n} - \frac{k + 2}{x^{\alpha - 1/2}((k + 1) \log x + \log n)}
\end{equation*}
This is equivalent to
\begin{eqnarray*}
\frac{1}{x^{\alpha - 1/2}}\left(  \frac{k + 2}{(k + 1) \log x + \log n} - \frac{1}{\log x}\right) &\leq& \frac{k + 1}{k \log x + \log n} - \frac{1}{\log x} \\
\iff \frac{1}{x^{\alpha - 1/2}} \left( \frac{\log x - \log n}{((k + 1) \log x + \log n)\log x}\right) &\leq& \left( \frac{\log x - \log n}{(k\log x + \log n)\log x}\right).
\end{eqnarray*}
The above inequality follows from the fact that $\frac{1}{x^{\alpha - 1/2}} \leq 1$. This proves the lemma. 
\end{proof}

From (\ref{fabound}), (\ref{ineq:twotermdifferent}) and Lemma \ref{lem:initialIneq}, we have
\begin{align} \label{eqn:mainsumoverprimes}
\begin{split}
\frac{1}{2\pi} & \sum_{n = 2}^{\infty} \frac{\Lambda(n)}{\sqrt{n}} \hat{g}_{\Delta} \left( \frac{\log n}{2\pi} \right)\left( e^{-i t\log n} + e^{it \log n }\right)  \\
&\leq  2 \sum_{n \leq x} \frac{\Lambda(n)}{\sqrt{n}} \sum_{k = 0}^{\infty} (-1)^k \left(\frac{k + 1}{k \log x + \log n }  \frac{1}{(nx^k)^{\alpha - 1/2}}\right.  \\
&   \ \ \ \ \ \ \ \ \ \ \ \ \ \ \ \ \ \ \ \ \ \ \ \ \ \ \left. - \frac{k + 1}{((k + 2)\log x - \log n)}  \frac{n^{\alpha - 1/2}}{(x^{k + 2})^{\alpha - 1/2}} \right)  \\
& \ \ \ \ \ \ \ \ \ \ \ \ \ \ \ \ \ \ \ \ \ \ \ \ \ \ \ \ \ \ \ \ \ \ \ \ \ \ - 2 \log \left|\zeta \left(\frac 52 + it\right)\right| + O(e^{- 3\pi \Delta}). 
\end{split}
\end{align} 
Rearranging the terms and using Lemma \ref{lem:boundSumOverk}, we obtain that the sum over $k$ is bounded above by
\begin{align} \label{ineq:BoundOverk}
\begin{split}
\sum_{k = 0}^{\infty}& (-1)^k \left(\frac{k + 1}{k \log x + \log n }  \frac{1}{(nx^k)^{\alpha - 1/2}}  - \frac{k + 1}{((k + 2)\log x - \log n)}  \frac{n^{\alpha - 1/2}}{(x^{k + 2})^{\alpha - 1/2}} \right) \\
&\leq \frac{1}{n^{\alpha - 1/2}\log n} - \frac{n^{\alpha - 1/2}}{(2\log x - \log n)\,x^{2\alpha - 1}} \\
&  \ \ \ \  \ \ \ \ \ \ \ \ \ \ \ \ \ \ \ \ \ \ \ \ \ \ + \frac{1}{\log x} \sum_{k = 1}^{\infty} (-1)^k \left(  \frac{1}{(nx^k)^{\alpha - 1/2}}  - \frac{n^{\alpha - 1/2}}{(x^{k + 2})^{\alpha - 1/2}} \right)  \\
&=\frac{1}{n^{\alpha - 1/2}\log n} - \frac{n^{\alpha - 1/2}}{(2\log x - \log n)\,x^{2\alpha - 1}} \\
&  \ \ \ \ \ \ \ \ \ \ \ \ \ \ \ \ \ \ \ \ \ \ \ \ \ \ \ - \frac{1}{\log x(x^{\alpha - 1/2} + 1)} \left( \frac{1}{n^{\alpha - 1/2}} - \frac{n^{\alpha - 1/2}}{x^{2\alpha - 1}} \right). 
\end{split}
\end{align}

Recall that the prime number theorem on the Riemann hypothesis is
\begin{equation} \label{eqn:PNT}
\sum_{n \leq x} \Lambda (n) = x + O(x^{1/2}\log x).
\end{equation}
Therefore using partial summation, (\ref{ineq:BoundOverk}), and ({\ref{eqn:PNT}}), we obtain that
\begin{align*}
 &\sum_{n \leq x} \frac{\Lambda(n)}{\sqrt{n}}  \sum_{k = 0}^{\infty} (-1)^k \left(\frac{k + 1}{k \log x + \log n }  \frac{1}{(nx^k)^{\alpha - 1/2}}  - \frac{k + 1}{((k + 2)\log x - \log n)}  \frac{n^{\alpha - 1/2}}{(x^{k + 2})^{\alpha - 1/2}} \right) \\
&\leq  \int_{2}^{x} \left( \frac{1}{t^{\alpha} \log t} - \frac{1}{\log x(x^{\alpha - 1/2} + 1) t^{\alpha}} \right) \dt\\
& \ \ \ \ \ \ \  \ \ \ \  \ \ \ \ \ \ \  - \frac{1}{x^{2\alpha - 1}} \int_{2}^{x} \left( \frac{1}{t^{1- \alpha}(2\log x - \log t)}  - \frac{1}{t^{1 - \alpha}\log x(x^{\alpha - 1/2} + 1)} \right)  \, \dt \\
&  \ \ \ \ \ \ \ \ \ \ \ \ \ \ \ \ \ \ \ \ \ \ \ \ \ \ \ \ \  \ \ \ \ \ \ \ \ \ \ \ \ \ \ \ \ \ \ \ \ \ \ \ \ + O\left(\min\left\{\log x, \frac{1}{\alpha - 1/2} \right\}\right) \\
& = A(x) - B(x) + O\left(\min\left\{\log x, \frac{1}{\alpha - 1/2} \right\}\right),
\end{align*}
where the asymptotics af $A(x)$ and $B(x)$ (calculated in the Appendix) are given by
\begin{equation*}
A(x) = \left\{ \begin{array}{l} \log \log x  + O\left(1\right), \ \ \ \   {\rm if }\,\,\, (1 - \alpha)\log x = O(1); \\
\frac{x^{1-\alpha}}{(1-\alpha)\log x} \left(\frac{x^{\alpha - 1/2}}{x^{\alpha - 1/2} + 1}\right) + \log \log x + O\left(\frac{x^{1 - \alpha}}{(1- \alpha)^2\log^2 x}\right), \ \    {\rm otherwise}, \end{array}\right.
\end{equation*}
and
\begin{equation*}
B(x) = \frac{1}{\alpha} \frac{x^{1-\alpha}}{\log x} \left( \frac{x^{\alpha - 1/2}}{x^{\alpha-1/2} +1}\right) + O\left( \frac{x^{1-\alpha}}{\log^2 x}\right).
\end{equation*}
Therefore the sum over prime powers is 
\begin{align}
\begin{split}
\label{eqn:summarysumoverprime}
& \sum_{n \leq x} \frac{\Lambda(n)}{\sqrt{n}} \sum_{k = 0}^{\infty} (-1)^k \left(\frac{k + 1}{k \log x + \log n }  \frac{1}{(nx^k)^{\alpha - 1/2}}  - \frac{k + 1}{((k + 2)\log x - \log n)}  \frac{n^{\alpha - 1/2}}{(x^{k + 2})^{\alpha - 1/2}} \right) \\
&\leq \left\{ \begin{array}{l} \log \log x  + O\left(1\right), \ \ \  {\rm if }\,\,\, (1 - \alpha)\log x = O(1);  \\
\frac{2\alpha - 1}{\alpha (1 - \alpha)}\frac{x^{1-\alpha}}{\log x} \left(\frac{x^{\alpha - 1/2}}{x^{\alpha - 1/2} + 1}\right) + \log \log x + O\left(\frac{x^{1 - \alpha}}{(1- \alpha)^2\log^2 x}\right), \ \  {\rm otherwise.} \end{array}\right. 
\end{split}
\end{align}

\subsubsection{Final Analysis} Combining all the results above (equations (\ref{Final1})-(\ref{tabound}) and (\ref{eqn:summarysumoverprime})), and recalling that $x = e^{2\pi \Delta}$, we obtain 
\begin{align}\label{Final2}
\begin{split}
\log &|\zeta(\alpha + it)| \leq \frac{1}{2\pi\Delta} \log \left( \frac{1 + e^{-(2\alpha - 1)\pi \Delta}}{1 + e^{-4\pi\Delta}}\right) \log \frac t2 \\
&\\
& \ \ \ \ \ \ \ \ \ \ \ \ \ \ \ \ \ \ \ \ \ \  + O\left( \Delta^2 \frac{e^{\pi \Delta}}{1 + \Delta t} \right) + O\left(\frac{\Delta \log(1 + \sqrt{t}\Delta)}{\sqrt{t}}\right)\\
&\\
& \ \ \ \ + \left\{ \begin{array}{l} \log 2\pi \Delta  + O\left(1\right), \ \  {\rm if }\,\,\, (1 - \alpha)\Delta = O(1);  \\
\\
\frac{2\alpha - 1}{\alpha (1 - \alpha)}\frac{e^{(2-2\alpha)\pi\Delta}}{2\pi\Delta} \left(\frac{e^{(2\alpha - 1)\pi\Delta}}{e^{(2\alpha - 1)\pi\Delta} + 1}\right) + \log 2\pi\Delta + O\left(\frac{e^{(2 - 2\alpha)\pi\Delta}}{(1- \alpha)^2 \pi^2\Delta^2}\right), \\
\ \ \ \ \ \ \ \ \ \ \ \ \ \ \ \ \ \ \ \ \ \ \ \ \ \ \ \ \ \ \ \ \ \ \ \ \ \ \ \ \ \ \ \ \ \ \ \ \ \ \ \ \ \ \ \ \ \ \ \ \ \ \ \ \ \ {\rm otherwise.} \end{array}\right.
\end{split}
\end{align}
An optimal bound in (\ref{Final2}) occurs when $\pi \Delta = \log \log t.$ This upper bound depends on how far $\alpha$ is from 1/2 and 1, and we examine three cases:

\begin{enumerate}
\item[{\it Case 1}.]$\alpha - 1/2 = O\left(\frac{1}{\log \log t}\right)$.\\
For this case, $\frac{2\alpha - 1}{\alpha (1-\alpha)} = O\left(\frac{1}{\log \log t}\right)$, and the upper bound (\ref{Final2}) becomes
\begin{equation*}
\log |\zeta(\alpha + it)|  \leq  \log \left( 1 +  (\log t)^{-(2\alpha - 1)}\right) \frac{\log t}{2\log \log t} + O\left( \frac{(\log t)^{2-2\alpha}}{(\log \log t)^2} \right),
\end{equation*}
which, as mentioned in the Introduction, recovers the main term in (\ref{Intro4}) when $\alpha \rightarrow 1/2$.\\

\item[\it Case 2.] $1 - \alpha = O\left(\frac{1}{\log \log t}\right).$ \\
The upper bound is
$$ \log |\zeta(\alpha + it)|  \leq \log (2 \log \log t) + O(1).$$
In \S 2.2, we will bound explicitly what the constant term is for $|\zeta(1 + it)|.$\\

\item[\it Case 3.] Otherwise, we have 
\begin{equation*}
\log \left( 1 +  e^{-(2\alpha - 1)\pi \Delta}\right) \asymp \frac{1}{(\log t)^{2\alpha - 1} },
\end{equation*}
and the upper bound (\ref{Final2}) becomes
\begin{align*}
\log |\zeta(\alpha + it)|  \leq &\left( \frac{1}{2} + \frac{2\alpha - 1}{\alpha (1 - \alpha)} \right)\frac{(\log t)^{2 - 2\alpha}}{\log \log t} \\
& \ \ \ \ \ \ \ \ \ \ + \log (2\log \log t) + O\left(\frac{(\log t)^{2 - 2\alpha}}{(1 - \alpha)^2(\log \log t)^2}\right).
\end{align*}
\end{enumerate}
This completes the proof of Theorem \ref{thm:upperbound}.

\subsection{An upper bound for $|\zeta (1 + it)|$}  In this section we will bound $\zeta(1 + it)$ and rederive Littlewood's result \cite{L2}, which is
$$ |\zeta(1 + it)| \leq (2e^{\gamma} + o(1))\log \log t ,$$
where $\gamma$ is the Euler constant.

The method used to bound $|\zeta(1 + it)|$ is the same as the above except that we will bound $\sum_{n \leq x} \frac{\Lambda(n)}{n\log n}$ with an error term $o(1).$ From \cite{L2} and Merten's formula \cite{MV}, we have
$$  \sum_{n \leq x} \frac{\Lambda(n)}{n\log n} = \log \log x + \gamma + O\left(\frac{1}{\log x}\right).$$
Moreover by the prime number theorem (\ref{eqn:PNT}),
$$ \sum_{n \leq x} \frac{\Lambda(n)}{x(2\log x - \log n)} \leq \frac{1}{x \log x} \sum_{n \leq x} \Lambda(n) = O\left(\frac{1}{\log x}\right).$$
To obtain a constant term of the upper bound for $|\zeta(1 + it)|,$ we will exploit a refined upper bound for (\ref{eqn:logeqn}). To be precise, we can show that
\begin{align} \label{eqn:upplogeqn}
\begin{split}
\log \left|\zeta\big(\alpha + it\big)\right| &= \left(\frac{5}{4} - \frac{\alpha}{2}\right) \log \frac{t}{2} - \frac{1}{2}\sum_{\gamma} f_{\alpha}(t-\gamma) \\
& \ \ \ \ \  \ \ \ \ \ + \log|\zeta \big( \tfrac 52 + it \big)| - \left(\frac{5}{4} - \frac{\alpha}{2}\right)\log \pi + O\left(\frac{1}{t}\right)
\end{split}
\end{align}
using Stirling's formula.

Therefore by (\ref{eqn:upplogeqn}) and the bounds for each terms in the explicit formula, we obtain that
\begin{align*}
\log |\zeta(1 + it)| \,\, &\leq \,\, \log 2e^{\gamma}\pi \Delta + \frac{1}{2\pi\Delta}\log \left( \frac{1 + e^{-\pi \Delta}}{1 + e^{-4\pi\Delta}}\right) \log \frac t2 \\
& \,\,\,\,\,\,\,\,\,\,\,\,\,\,\,\, + O\left(\frac{\Delta \log (1 + \sqrt{t}\Delta)}{\sqrt{t}} + \frac{\Delta e^{\pi\Delta}}{t} + \frac{1}{\pi \Delta} + \frac{1}{t}\right).
\end{align*}
The upper bound of $|\zeta(1 + it)|$ in Corollary \ref{cor:boundAt1} follows from choosing  $\pi\Delta = \log \log t.$

\section{Lower bound for $\zeta(s)$}
\subsection{Proof of Theorem \ref{thm:lowerbound}}The method of computing a lower bound for $\zeta(\alpha + it)$ is similar to the one for the upper bound in Section 2, with the only difference being the use of a majorant function instead. The majorant function that we are interested in satisfies the following properties (that shall be proved in the next section).

\begin{lemma}[Extremal Majorant] \label{prop:mainproplowerbound} 
Let $\Delta$ denote a positive real number. There is a unique entire function $m_{\Delta}$ which satisfies the following properties: 
\begin{itemize}
 \item[(i)] For all real $x$ we have 
\begin{equation*}
f_{\alpha}(x) \leq  m_{\Delta}(x) \leq C \frac{1}{1+x^2} \,,
\end{equation*}
for some positive constant $C$.  For any complex number $x+iy$ we have 
\begin{equation*}
 |m_{\Delta}(x+iy)| \ll  \frac{\Delta^2  }{1+ \Delta |x+iy|}e^{2\pi \Delta|y|}.
\end{equation*}
 
 \item[(ii)]  The Fourier transform of $m_{\Delta}$, namely 
 $$
 {\hat m}_{\Delta}(\xi) = \int_{-\infty}^{\infty} m_{\Delta}(x) e^{-2\pi i x \xi } \dx,  
 $$ 
 is a continuous real valued function supported on the interval $[-\Delta, \Delta].$ For $0 \leq |\xi| \leq \Delta,$ it is given by
\begin{align*}\label{eqn:mDelta}
\hat{m}_{\Delta}(\xi) &= \sum_{k = 0}^{\infty}  \left( \frac{k + 1}{|\xi| + k \Delta} \left( e^{-2\pi (|\xi| + k\Delta)(\alpha - 1/2)} -  e^{-4\pi (|\xi| + k\Delta)}\right) \right. \\ 
&\ \ \ \ \ \ \left. - \frac{k + 1}{\Delta (k + 2) - |\xi| } \left( e^{2\pi(|\xi|-(k + 2)\Delta)(\alpha - 1/2)} - e^{4\pi (|\xi| - (k + 2)\Delta)} \right)\right).
\end{align*}
In particular,  if $\xi = 0,$ we have
\begin{equation*}
\hat{m}_{\Delta}(0) =  2\pi \left( \frac 52 - \alpha \right) - \frac{2}{\Delta}\log \left(\frac{1 - e^{-(2\alpha - 1)\pi\Delta}}{1 - e^{-4\pi\Delta}}\right).
\end{equation*}
 
\item[(iii)] The $L^1$-distance between $m_{\Delta}$ and $f_{\alpha}$ equals to
 $$
\int_{-\infty}^{\infty} \{m_{\Delta} (x) - f_{\alpha}(x)\}\,  \dx = \frac{2}{\Delta} \left(\log \bigl(1 - e^{-4\pi \Delta}\bigr) - \log \bigl(1 - e^{-(2\alpha - 1)\pi \Delta} \bigr)\right) .
 $$
\end{itemize}
 \end{lemma}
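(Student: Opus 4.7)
The plan is to parallel the construction of the minorant in Lemma \ref{prop:mainprop}, replacing each building block by its extremal majorant counterpart from \cite{CLV}. The natural starting point is the integral representation
\begin{equation*}
f_\alpha(x) = \int_{\alpha - 1/2}^{2} \frac{2\lambda}{\lambda^2 + x^2}\,\dl,
\end{equation*}
which writes $f_\alpha$ as a continuous superposition of Poisson kernels; the Fourier transform of $\lambda/[\pi(\lambda^2 + x^2)]$ is $e^{-2\pi\lambda|\xi|}$, placing us squarely inside the framework of \cite{CLV}. For each fixed $\lambda$ the extremal majorant of exponential type $2\pi\Delta$ for $2\lambda/(\lambda^2+x^2)$ is available in explicit closed form, and its Fourier transform is a Hurwitz-type periodization of $2\pi e^{-2\pi\lambda|\xi|}$ on $[-\Delta,\Delta]$. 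Integrating these pointwise majorants in $\lambda$ over $[\alpha - 1/2, 2]$ defines the candidate entire function $m_\Delta$ of exponential type $2\pi\Delta$, majorizing $f_\alpha$ on $\R$.

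Each part of the lemma is then verified directly. For (i), the pointwise majorant inequality $f_\alpha \le m_\Delta$ is inherited from the building blocks; the bound $m_\Delta(x) \le C/(1+x^2)$ follows from $f_\alpha(x) = O(1/x^2)$ together with the $L^1$ control of the gap $m_\Delta - f_\alpha$; and the complex bound on $x+iy$ is a standard Phragm\'en--Lindel\"of estimate for entire functions of exponential type $2\pi\Delta$ whose restriction to $\R$ lies in an appropriate weighted $L^1$ class. For (ii), taking the Fourier transform of the $\lambda$-integrand and exchanging the order of integration produces the claimed sum on $k$: the prefactor $(k+1)/(|\xi|+k\Delta)$ is exactly the $k$-th translate in the Hurwitz periodization on $[-\Delta,\Delta]$, while the inner difference $e^{-2\pi(|\xi|+k\Delta)(\alpha-1/2)} - e^{-4\pi(|\xi|+k\Delta)}$ records the two endpoints of the $\lambda$-integration. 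Evaluating at $\xi = 0$ and telescoping the resulting series gives the closed form for $\hat{m}_\Delta(0)$. Part (iii) then follows immediately from Plancherel at $\xi = 0$,
\begin{equation*}
\int_{-\infty}^{\infty}\{m_\Delta(x) - f_\alpha(x)\}\,\dx = \hat{m}_\Delta(0) - \hat{f}_\alpha(0),
\end{equation*}
combined with $\hat{f}_\alpha(0) = \int_{\R} f_\alpha(x)\,\dx = 2\pi(5/2 - \alpha)$, itself a direct consequence of the $\lambda$-representation.

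The main obstacle will be the extremality/uniqueness statement: one must show that among all entire majorants of $f_\alpha$ of exponential type $2\pi\Delta$, the function $m_\Delta$ achieves the (minimal) $L^1$-distance recorded in (iii). This is the technical core inherited from \cite{CLV}, proved there by a duality argument together with interpolation at the zeros of an appropriate sine-type function. The only substantive difference from the proof of Lemma \ref{prop:mainprop} is the opposite sign choice in those interpolation conditions, which is precisely what removes the factor $(-1)^k$ from the Fourier-side formula and flips the signs inside each logarithm appearing in $\hat{m}_\Delta(0)$ and in the $L^1$-distance.
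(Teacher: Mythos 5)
Your proposal is correct in spirit and takes a genuinely different route through the \cite{CLV} machinery than the paper does. The paper starts from the Gaussian subordination identity (\ref{eqn:measureforfunction}), writes $F_\Delta(x)$ as a superposition of Gaussians $e^{-\pi\lambda x^2}$, invokes \cite[Theorem 4]{CLV} to get $\hat M_\Delta$ as an integral of Jacobi theta expressions against the subordinating measure, and then works hard with Poisson summation and contour integration (paralleling \S 4.1.2) to reach the closed-form series. You instead decompose $f_\alpha$ as a superposition of Poisson kernels over the finite parameter range $[\alpha-\tfrac 12, 2]$, cite the closed-form extremal majorants for those building blocks from \cite{CLV}, and produce the Fourier-side series by a simple interchange of the $\lambda$-integral with the Fourier transform. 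The two decompositions are of course linked (the Poisson kernel is itself Gaussian-subordinated, so iterating your representation in $\lambda$ and $v$ reproduces (\ref{eqn:measureforfunction})), but as a computational route yours is shorter at the expense of needing the Poisson-kernel example from \cite{CLV} to be already in explicit form. Your derivation of part (iii) directly from $\hat m_\Delta(0) - \hat f_\alpha(0)$ (this is just Fourier inversion at $0$, not Plancherel, but correct) is a genuine simplification --- the paper cites \cite[Corollary 17, Example 3]{CLV} for part (iii) separately rather than deducing it from part (ii).

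The one place where your argument has a real gap is part (i), specifically the claim that $m_\Delta(x) \le C/(1+x^2)$ ``follows from $f_\alpha(x) = O(1/x^2)$ together with the $L^1$ control of the gap.'' An $L^1$ bound on the nonnegative bandlimited function $m_\Delta - f_\alpha$ gives boundedness (via $\|\hat m_\Delta - \hat f_\alpha\|_\infty$) but does \emph{not} by itself give quadratic pointwise decay, and the Fej\'er factorization argument only shows the gap tends to zero at infinity, not at any rate. The paper proves the analogous bound for $G_\Delta$ by expanding the interpolation series (\ref{eqn:sumofGDelta}) (its sine-kernel analogue (\ref{eqn:sumofMDelta}) for $M_\Delta$), pairing $n$ with $-n$, and exploiting the explicit decay (\ref{relationforf}) of $f_\alpha$ and $f_\alpha'$. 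Within your framework the right fix is different but available: each Poisson-kernel majorant satisfies a bound $M_{\Delta,\lambda}(x) \le C(\Delta,\lambda)/(1+x^2)$ that is uniform for $\lambda$ in the compact range $[\alpha-\tfrac12, 2]$, and integrating that pointwise bound in $\lambda$ gives what you need. Either way, the argument as written does not establish the pointwise estimate and needs to be replaced.
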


From (\ref{eqn:logeqn}) and (i) of Lemma \ref{prop:mainproplowerbound} we obtain, for any $\Delta >0$,
\begin{equation}\label{Final12}
\log |\zeta(\alpha + it)| \geq  \left(\frac{5}{4} - \frac{\alpha}{2}\right) \log \frac t2 - \frac{1}{2}\sum_{\gamma} m_{\Delta}(t-\gamma) + O(1).
\end{equation}
We then apply the explicit formula (Lemma \ref{lem:explicitformula}) to the majorant function $m_{\Delta}(z)$ to get
\begin{align}\label{Fai_1}
\begin{split}
\sum_{\rho} m_{\Delta}(t - \gamma) &= \left\{m_{\Delta}\Bigl( t - \frac{1}{2i}\Bigr) + m_{\Delta}\Bigl( t + \frac{1}{2i}\Bigr)\right\} - \frac{1}{2\pi} \hat{m}_{\Delta}(0) \log \pi \\
& \ \ \ \ + \frac{1}{2\pi}\int_{-\infty}^{\infty} m_{\Delta}(x)\, {\rm Re} \frac{\Gamma '}{\Gamma}\left( \frac{1}{4} + \frac{i(t-x)}{2} \right)\, \dx \\
& \ \ \ \ \ \ \ \ \ \ - \frac{1}{2\pi} \sum_{n = 2}^{\infty} \frac{\Lambda(n)}{\sqrt{n}}\, \hat{m}_{\Delta} \left( \frac{\log n}{2\pi} \right)\left( e^{-i t\log n} + e^{it \log n }\right),
\end{split}
\end{align}
and the asymptotic analysis follows just as before:
\subsubsection{First term} From (i) of Lemma \ref{prop:mainproplowerbound} we have
\begin{equation} \label{eqn:constantTtermLowerbound}
\left|m_{\Delta}\left( t - \frac{1}{2i}\right) + m_{\Delta}\left( t + \frac{1}{2i}\right)\right| \ll \Delta^2 \frac{e^{\pi \Delta}}{1+\Delta t}.
\end{equation}

\subsubsection{Second term} From (ii) of Lemma \ref{prop:mainproplowerbound} we have
\begin{equation} \label{eqn:constantZerotermLowerbound}
\frac{1}{2\pi} \hat{m}_{\Delta}(0) \log \pi =  \left( \frac 52 - \alpha \right)\log \pi - \frac{\log \pi}{\pi\Delta}\log \left(\frac{1 - e^{-(2\alpha - 1)\pi\Delta}}{1 - e^{-4\pi\Delta}}\right).
\end{equation} 

\subsubsection{Third term}
Proceeding as in \S 2.1.3 we obtain
\begin{align} \label{eqn:integraltermLowerbound}
\begin{split}
& \frac{1}{2\pi}\int_{-\infty}^{\infty} m_{\Delta}(x) \,{\rm Re} \frac{\Gamma '}{\Gamma}\left( \frac{1}{4} + \frac{i(t-x)}{2} \right) \dx \\
& \ \ \ \ \ = \left(\frac 52 - \alpha \right)\log \frac t2 - \frac{1}{\pi\Delta} \log \left( \frac{1 - e^{-(2\alpha - 1)\pi \Delta}}{1 - e^{-4\pi\Delta}}\right) \log \frac t2 \\
& \ \ \ \ \ \ \ \  \ \ \ \ \ \ \  + O\left( \frac{\Delta \log (1 + \sqrt{t}\Delta)}{\sqrt{t}} \right). 
\end{split}
\end{align}

\subsubsection{Fourth term (sum over prime powers)}
By the same arguments that led to (\ref{eqn:mainsumoverprimes}), using inequality (\ref{ineq:twotermdifferent}), the sum over prime is bounded below as follows (recall that $x = e^{2\pi\Delta}$)
\begin{align} \label{eqn:LowerboundMainsumoverprimes}
\begin{split}
\frac{1}{2\pi} \sum_{n = 2}^{\infty} & \frac{\Lambda(n)}{\sqrt{n}}\,\hat{m}_{\Delta} \left( \frac{\log n}{2\pi} \right)\left( e^{-i t\log n} + e^{it \log n }\right) \\
&\geq - 2 \sum_{n \leq x} \frac{\Lambda(n)}{\sqrt{n}} \sum_{k = 0}^{\infty} \left(\frac{k + 1}{k \log x + \log n }  \frac{1}{(nx^k)^{\alpha - 1/2}}  \right. \\
& \ \ \ \ \ \ \ \ \ \ \ \ \ \ \ \ \ \ \ \ \ \ \ \ \ \ \left. - \frac{k + 1}{((k + 2)\log x - \log n)}  \frac{n^{\alpha - 1/2}}{(x^{k + 2})^{\alpha - 1/2}} \right) \\
&  \ \ \ \ \ \ \ \ \ \ \ \ \ \ \ \ \ \ \ \ \ \ \ \ \ \ \ \ \ \ \ \ \ \ \ \ \ \ \ - 2 \log \left|\zeta \left(\frac 52 + it\right)\right| + O\bigl(e^{- 3\pi \Delta}\bigr). 
\end{split}
\end{align} 
The sum over $k$ in (\ref{eqn:LowerboundMainsumoverprimes}) is bounded above by 
\begin{align} \label{ineq:BoundOverkLowerbound}
\begin{split}
\sum_{k = 0}^{\infty} & \left(\frac{k + 1}{k \log x + \log n }  \frac{1}{(nx^k)^{\alpha - 1/2}}  - \frac{k + 1}{((k + 2)\log x - \log n)}  \frac{n^{\alpha - 1/2}}{(x^{k + 2})^{\alpha - 1/2}} \right) \\
&\leq \ \ \sum_{k = 0}^{\infty} \frac{1}{(x^k)^{\alpha - 1/2}} \left(\frac{1}{n^{\alpha - 1/2}\log n}    - \frac{n^{\alpha - 1/2}}{(2\log x - \log n)(x^{2\alpha - 1})} \right) \\
&=  \ \ \frac{x^{\alpha - 1/2}}{x^{\alpha - 1/2} - 1} \left(\frac{1}{n^{\alpha - 1/2}\log n}    - \frac{n^{\alpha - 1/2}}{(2\log x - \log n)(x^{2\alpha - 1})} \right). 
\end{split}
\end{align}
Using partial summation, the prime number theorem (\ref{eqn:PNT}), equation (\ref{ineq:BoundOverkLowerbound}), and the integrals on the Appendix, we obtain that
\begin{align}\label{Fai_2}
\begin{split}
\sum_{n \leq x} & \frac{\Lambda(n)}{\sqrt{n}} \sum_{k = 0}^{\infty}  \left(\frac{k + 1}{k \log x + \log n }  \frac{1}{(nx^k)^{\alpha - 1/2}}  - \frac{k + 1}{((k + 2)\log x - \log n)}  \frac{n^{\alpha - 1/2}}{(x^{k + 2})^{\alpha - 1/2}} \right) \\
&\leq  \frac{x^{\alpha - 1/2}}{x^{\alpha - 1/2} - 1}  \left\{ \int_{2}^{x} \left( \frac{1}{t^{\alpha} \log t} - \frac{1}{x^{2\alpha - 1}}  \frac{1}{t^{1- \alpha}(2\log x - \log t)} \right)  \dt \right.\\
&\left. \ \ \ \ \ \ \ \ \ \ \ \ \ \ \ \ \ \ \ \ \ \ \ \ \ \ \ \ \ \ \ \ \ \ \ \ \ \ \ \ \ \ \ \ \ \ \ \ \ \ \ \ \ \ \ \ \ \ \ \ \ + O\left( \min \left\{ \log x, \frac{1}{\alpha - 1/2} \right\} \right)\right\} \\
&= \left\{ \begin{array}{l} \log \log x  + O(1), \ \ \ \  {\rm if }\,\,\, (1 - \alpha)\log x = O(1); \\
\\
\frac{x^{\alpha - 1/2}}{x^{\alpha - 1/2} - 1}\left\{\frac{2\alpha - 1}{\alpha (1 - \alpha)}\frac{x^{1-\alpha}}{\log x}   + \log \log x + O\left(\frac{x^{1 - \alpha}}{(1- \alpha)^2\log^2 x}\right)\right\}, \ {\rm otherwise.} \end{array}\right. 
\end{split}
\end{align}

\subsubsection{Final Analysis}
Combining the bounds (\ref{Final12})-(\ref{Fai_2}) above, and using the fact that $x = e^{2\pi \Delta}$, we derive
\begin{align} \label{Fai_3}
\begin{split}
&\log |\zeta(\alpha + it)| \geq  \frac{1}{2\pi\Delta} \log \left( \frac{1 - e^{-(2\alpha - 1)\pi \Delta}}{1 - e^{-4\pi\Delta}}\right) \log \frac t2 \\
\\
& \ \ \ \ \ \ \ \ \ \ \ \ \ \ \ \ \ \ \ \ \ \  + O\left( \Delta^2 \frac{e^{\pi \Delta}}{1 + \Delta t} \right) + O\left(\frac{\Delta \log(1 + \sqrt{t}\Delta)}{\sqrt{t}}\right)\\
&\\
& \ \ \ \ - \left\{ \begin{array}{l} \log 2\pi \Delta  + O\left(1\right), \ \  {\rm if }\,\,\, (1 - \alpha)\Delta = O(1);  \\
\\
\left(\frac{e^{(2\alpha - 1)\pi\Delta}}{e^{(2\alpha - 1)\pi\Delta} - 1}\right) \left\{ \frac{2\alpha - 1}{\alpha (1 - \alpha)}\frac{e^{(2-2\alpha)\pi\Delta}}{2\pi\Delta} + \log 2\pi\Delta + O\left(\frac{e^{(2-2\alpha)\pi\Delta}}{(1- \alpha)^2\pi^2\Delta^2}\right)\right\},\\
\ \ \ \ \ \ \ \ \ \ \ \ \ \ \ \ \ \ \ \ \ \ \ \ \ \ \ \ \ \ \  \ \ \ \ \ \ \ \ \ \ \ \ \ \ \ \ \ \ \ \ \ \ \ \ \ \ \ \ \ \ \ \ \ \ \ \ \ \ \   {\rm otherwise.} 
\end{array}\right.
\end{split}
\end{align}
An optimal bound in (\ref{Fai_3}) occurs when $\pi \Delta = \log \log t.$ Similar to the upper bound, the lower bound depends on the location of $\alpha$, and again we examine three cases:
\begin{enumerate}
\item[{\it Case 1}.] $\alpha - 1/2 = O\left(\frac{1}{\log \log t}\right)$.\\
For this case, $\frac{2\alpha - 1}{\alpha (1-\alpha)} = O\left(\frac{1}{\log \log t}\right)$ and the lower bound (\ref{Fai_3}) becomes
\begin{align*}
 \log |\zeta(\alpha + it)|  & \geq  \log \left( 1 -  (\log t)^{1 - 2\alpha }\right) \frac{\log t}{2\log \log t} \\
 & \ \ \ \ \ \ \ \ \ \ \ \ \ \ \ \ \ \ \ \ \ - O\left( \frac{(\log t)^{2-2\alpha}}{(\log \log t)^2(1 - (\log t)^{1 - 2\alpha})} \right).
\end{align*}
Observe that when $\alpha \rightarrow 1/2,$ the bound goes to $-\infty,$ which corresponds to the case when $\zeta(1/2  + it) = 0.$\\

\item[{\it Case 2}.] $1 - \alpha = O\left(\frac{1}{\log \log t}\right).$ \\
For this case, the lower bound (\ref{Fai_3}) is
$$ \log |\zeta(\alpha + it)|  \geq - \log (2 \log \log t) - O(1).$$
In \S 3.2, we will bound explicitly what the constant term is for $|\zeta(1 + it)|.$\\

\item[{\it Case 3}.] Otherwise, $\log \left( 1 -  e^{-(2\alpha - 1)\pi \Delta}\right) \asymp -\frac{1}{(\log t)^{2\alpha - 1} }$, and the lower bound (\ref{Fai_3}) becomes 
\begin{align*}
\log |\zeta(\alpha + it)|  &\geq - \left( \frac{1}{2} + \frac{2\alpha - 1}{\alpha (1 - \alpha)} \right)\frac{(\log t)^{2 - 2\alpha}}{\log \log t} - \log (2\log \log t) \\ 
&  \ \ \ \ \ \ \ \ \ \ \ \ \ \ \ \ \ \ \ \ \ \ \ \ \ \ \ \ \ \ \ \ -O\left(\frac{(\log t)^{2 - 2\alpha}}{(1-\alpha)^2(\log \log t)^2}\right).
\end{align*}
\end{enumerate}
This completes the proof of Theorem \ref{thm:lowerbound}.

\subsection{A lower bound for $|\zeta (1 + it)|$}  In this section we will bound $1/\zeta(1 + it)$ and rederive Littlewood's result \cite{L3},
$$ \frac{1}{|\zeta(1 + it)|} \leq \left(\frac{12e^{\gamma}}{\pi^2} + o(1)\right)\log \log t ,$$
where $\gamma$ is the Euler constant.

To obtain this bound, we will use  (\ref{eqn:upplogeqn}). The method exploited to derive the bound for $1/|\zeta(1 + it)|$ is the same as the one in \S 3.1 except that we will bound ${\rm Re} \sum_{n \leq x} \frac{\Lambda(n)}{n^{1 + it}\log n}$ with the error term $o(1).$ 
The following identity from \cite{L3} is useful in bounding the sum over prime powers:
\begin{eqnarray*}
{\rm Re} \sum_{n \leq x} \frac{\Lambda(n)}{n^{1 + it} \log n} &=& -{\rm Re} \ \log \prod_{p \leq x} \left( 1 - \frac{1}{p^{1 + it}}\right) + O(\frac{1}{\sqrt{x}}) \\
&\geq& -{\rm Re} \ \log \prod_{p \leq x} \left( 1 + \frac{1}{p}\right) + O(\frac{1}{\sqrt{x}}). \\
&=& - \log \left(\frac{6e^{\gamma}}{\pi^2} \log x\right) + o(1). 
\end{eqnarray*}
Moreover by the prime number theorem, we have
$$ \sum_{n \leq x} \frac{\Lambda(n)}{x(2\log x - \log n)} \leq \frac{1}{x \log x} \sum_{n \leq x} \Lambda(n) = O\left(\frac{1}{\log x}\right).$$
By (\ref{eqn:upplogeqn}) and the same arguments contained in \S 3.1 we obtain
\begin{eqnarray*}
\log |\zeta(1 + it)| &\geq& - \log \left(\frac{12e^{\gamma}}{\pi^2} \pi \Delta \right) + \frac{1}{2\pi\Delta}\log \left( \frac{1 - e^{-\pi \Delta}}{1 - e^{-4\pi\Delta}}\right) \log \frac t2 \\
&& \,\,\,\,\,\,\,\,\,\,\,\,\,\,\, + O\left(\frac{\Delta \log (1 + \sqrt{t}\Delta)}{\sqrt{t}} + \frac{\Delta e^{\pi\Delta}}{t} + \frac{1}{\pi \Delta} \right).
\end{eqnarray*}
If we pick $\pi\Delta = \log \log t$, 
we obtain the bound in Corollary \ref{cor:boundAt1}.

\section{Extremal functions}
In this section we will discuss the extremal functions used in this paper, proving Lemmas \ref{prop:mainprop} and \ref{prop:mainproplowerbound}. This study relies substantially on the recent work of Carneiro, Littmann and Vaaler \cite{CLV} that contains the solution of the Beurling-Selberg extremal problem for the Gaussian and a general integration technique on the free parameter, producing a variety of new examples. In particular, the logarithmic family $f_{\alpha}(x)$ considered in this paper falls in the range of the ideas in \cite{CLV}.

Throughout this section we let $a = (\alpha - 1/2)\Delta,$ and $b = 2\Delta$.  We have the following identity
\begin{equation} \label{eqn:measureforfunction}
\log \left( \frac{x^2 + b^2}{x^2 + a^2} \right) = \int_0^{\infty} e^{-\pi \lambda x^2}\left(\frac{e^{-\pi \lambda a^2} - e^{-\pi \lambda b^2}}{\lambda}\right)\, \dl
\end{equation}
Define $F_{\Delta}(x)$ to be the expression on the left hand side of (\ref{eqn:measureforfunction}). It is clear that 
$$ f_\alpha(x) =  F_{\Delta}(\Delta x).$$
By Corollary 17 in \cite{CLV}, there is a unique extremal minorant $G_{\Delta}(x)$ and a unique extremal majorant $M_{\Delta}(x)$ of exponential type $2\pi$ for $F_{\Delta}(x).$ We will let
\begin{equation}\label{defofg}
g_{\Delta}(x) = G_{\Delta}(\Delta x) \,\,\,\,\,\, {\rm and} \,\,\,\,\,\,m_{\Delta}(x) = M_{\Delta}(\Delta x) 
\end{equation}
From \cite{CLV}, we also have
\begin{equation} \label{eqn:sumofGDelta}
G_{\Delta}(z) = \Big( \frac{\cos \pi z}{\pi }\Big)^2 \sum_{n=-\infty}^{\infty} 
 \left\{\frac{F_{\Delta}\bigl(n-\frac 12\bigr)}{\bigl(z-n+\frac 12\bigr)^2} + \frac{F_{\Delta}^{\prime}\bigl(n-\frac 12\bigr)}{\bigl(z-n+\frac 12\bigr)}\right\}, 
\end{equation}
and 
\begin{equation} \label{eqn:sumofMDelta}
M_{\Delta}(z) = \Big( \frac{\sin \pi z}{\pi }\Big)^2 \sum_{n=-\infty}^{\infty} 
 \left\{ \frac{F_{\Delta}(n)}{(z-n)^2} + \frac{F_{\Delta}^{\prime}(n)}{(z-n)}\right\}. 
\end{equation}

\subsection{Proof of Lemma \ref{prop:mainprop}} Part (iii) of Lemma \ref{prop:mainprop} is contained in \cite[Corollary 17, Example 3]{CLV}, and thus we will focus here in proving parts (i) and (ii).
\subsubsection{Part (i)} Observe first that 
\begin{equation} \label{eqn:sumofGDelta1}
G_{\Delta}(z) = \sum_{n=-\infty}^{\infty} \left(\frac{\sin \pi \bigl(z-n + \frac 12\bigr)}{\pi\bigl(z-n + \frac 12\bigr)}\right)^2 
 \left\{f_{\alpha}\left(\frac{n-\tfrac 12}{\Delta}\right) + \frac{\bigl(z-n + \frac 12\bigr)}{\Delta} f_{\alpha}^{\prime}\left(\frac{n-\tfrac 12}{\Delta}\right)\right\}. 
\end{equation}
For any complex number $\xi$ we have $(\sin (\pi \xi)/ (\pi \xi))^2 \ll e^{2 \pi |{\rm Im} \xi|}/ (1 + |\xi|^2)$. Using the fact that 
\begin{equation}\label{relationforf}
 f_{\alpha}(x) \leq \frac{4}{x^2 + \bigl(\alpha - \tfrac 12\bigr)^2} \ \ \ \  \textrm{and} \ \ \ \ f_{\alpha}^{\prime}(x) \leq \frac{8|x|}{(x^2 + 4)\bigl(x^2 + \bigl(\alpha - \tfrac 12\bigr)^2\bigr)}\,,
\end{equation}
we can split the sum (\ref{eqn:sumofGDelta1}) in two parts, where $n \leq |z|/2$ and $n \geq |z|/2$, to conclude that 
\begin{equation*}
 |G_{\Delta}(x+iy)| \ll  \frac{\Delta^2  }{1+ |x+iy|}e^{2\pi |y|}.
\end{equation*}
and from (\ref{defofg}) we arrive at (\ref{minlemma2}). 

For $x$ real, we have $f_{\alpha}(x)\geq 0$ and $f_{\alpha}^{\prime}(-x) = -f_{\alpha}^{\prime}(x)$, and we can pair the terms $n\geq 1$ and $1-n\leq0$ in the sum (\ref{eqn:sumofGDelta1}) to obtain
\begin{align}\label{2boundforg}
\begin{split}
G_{\Delta}(x) &\geq \Big( \frac{\cos \pi x}{\pi }\Big)^2 \sum_{n=1}^{\infty} \frac{1}{\Delta} f_{\alpha}^{\prime}\left(\frac{n-\tfrac 12}{\Delta}\right)\left\{ \frac{1}{\bigl(x - n + \tfrac 12\bigr)} - \frac{1}{\bigl(x + n - \tfrac12\bigr)}\right\}\\
& = \sum_{n=1}^{\infty} \frac{\sin^2 \pi\bigl(x -n +\tfrac12\bigr)}{\pi^2 \bigl(x^2 - \bigl(n-\tfrac 12\bigr)^2\bigr)}\, \frac{2 \bigl( n - \tfrac12 \bigr)}{\Delta}\, f_{\alpha}^{\prime}\left(\frac{n-\tfrac 12}{\Delta}\right).
\end{split}
\end{align}
Using (\ref{relationforf}) and (\ref{2boundforg}) we can show that there is a constant $C$ such that 
\begin{equation*}
-C \frac{\Delta^2}{\Delta^2 + x^2} \leq G_{\Delta}(x)\,,
\end{equation*}
and thus from (\ref{defofg}) we arrive at (\ref{minlemma21}), completing the proof of part (i).

\subsubsection{Part (ii)}  It is sufficient to consider the Fourier transform of $G_{\Delta}(x)$ since $\hat{g}_{\Delta}(y) = \frac{1}{\Delta}\hat{G}_{\Delta}\big(\frac{y}{\Delta}\big).$  

For $|y| \geq 1,$  $\hat{G}_{\Delta}(y) = 0.$ Therefore, in what follows we will consider $\hat{G}_{\Delta}(y)$ when $|y| < 1.$ From (\ref{eqn:measureforfunction}) and \cite[Theorem 4]{CLV}, we know that
\begin{align}\label{bigintegral}
\begin{split}
\hat{G}_{\Delta}(y) & = \int_0^{\infty} \left\{(1 - |y|) \sum_{n = -\infty}^{\infty} e^{-\pi \lambda (n + 1/2)^2} e^{2\pi i y(n + 1/2)} \right.\\ 
& \ \ \ \ \ \ \ \ \ \ \ \ \ \ \ \ \left. - \frac{\lambda}{2\pi}{\rm sgn}(y) \sum_{n = -\infty}^{\infty} 2\pi i \bigl(n + \tfrac 12\bigr) e^{-\pi \lambda (n + 1/2)^2} e^{2\pi i y(n + 1/2)} \right\} \\
&\ \ \ \ \ \ \ \ \ \ \ \ \ \ \ \ \ \ \ \ \ \ \cdot \left( \frac{e^{-\pi \lambda a^2} - e^{-\pi \lambda b^2}}{\lambda}\right) \dl.
\end{split}
\end{align}
It is easy to see that $\hat{G}_{\Delta}(y)$ is an even function. Therefore it is sufficient to consider the case $0 \leq y < 1$. We will evaluate the integrals of the first and second sums separately.
\\
{\it Integration of the first sum.} By calculus, we can show that 
\begin{align} \label{firstIntMin}
\begin{split}
 \int_0^{\infty}& (1 - |y|) \left\{\sum_{n = -\infty}^{\infty} e^{-\pi \lambda (n + 1/2)^2} e^{2\pi i y(n + 1/2)} \right\} \cdot \left( \frac{e^{-\pi \lambda a^2} - e^{-\pi \lambda b^2}}{\lambda}\right) \dl  \\
&= (1 - |y|) \, e^{\pi iy}\sum_{n = -\infty}^{\infty} \log\left( \frac{(n + 1/2)^2 + b^2}{(n + 1/2)^2 + a^2}\right)e^{2\pi iyn}. 
\end{split}
\end{align}
Let 
\begin{equation*}
\hat{k}(x) = \log\left( \frac{(x + 1/2)^2 + b^2}{(x + 1/2)^2 + a^2}\right).
\end{equation*}
To evaluate the sum over $n$, we will use Poisson summation formula, 
$$ \sum_{n \in \mathbb{Z}} \hat{k}(n) e^{2\pi i yn} = \sum_{n \in \mathbb{Z}} k(y + n).$$
Therefore we need to compute $k(w)$. For $w\neq 0$ we use integration by parts to get

\begin{align}\label{Sec3.2}
\begin{split}
k(w) &= \int_{-\infty}^{\infty} \log\left( \frac{(x + 1/2)^2 + b^2}{(x + 1/2)^2 + a^2}\right) e^{2\pi iwx} \, \dx \\
& \ \ \ \ \ \ \ \ \ \ \ \ \ \ \ \ \ \ = \frac{e^{-\pi i w}}{2\pi i w}  \int_{-\infty}^{\infty}  \frac{2x(b^2 - a^2)}{(x^2 + b^2)(x^2 + a^2)}\,e^{2\pi i wx}\, \dx. 
\end{split}
\end{align}
For $w = 0$ we will have
\begin{eqnarray*}
k(0) = \int_{-\infty}^{\infty} \log\left( \frac{(x + 1/2)^2 + b^2}{(x + 1/2)^2 + a^2}\right) \, \dx 
= 2\pi (b-a). 
\end{eqnarray*}
The integrals above can be computed via contour integration.
\begin{enumerate}
\item[{\it Case 1:}] $w > 0.$ The chosen contour is a rectangle with vertices $-X, X, X + iY, -X + iY,$ where $X, Y > 0,$ and $X, Y \rightarrow \infty.$
Therefore
\begin{equation}\label{Sec3.4}
\int_{-\infty}^{\infty}  \frac{2x(b^2 - a^2)}{(x^2 + b^2)(x^2 + a^2)}\,e^{2\pi i wx}\, \dx = 2\pi i \left( e^{-2\pi wa} - e^{-2\pi wb}\right).
\end{equation}
\item[{\it Case 2:}] $w < 0.$ The contour is a rectangle with vertices $X, -X, -X - iY, X - iY,$  where $X, Y > 0,$ and $X, Y \rightarrow \infty.$
Therefore
\begin{equation}\label{Sec3.5}
 \int_{-\infty}^{\infty}  \frac{2x(b^2 - a^2)}{(x^2 + b^2)(x^2 + a^2)}\,e^{2\pi i wx}\, \dx = -2\pi i \left( e^{2\pi wa} - e^{2\pi wb}\right).
\end{equation}
\end{enumerate}
Combining equations (\ref{firstIntMin}) - (\ref{Sec3.5}) above, for $y \neq 0,$ we obtain 
\begin{align*} 
& \int_0^{\infty} (1 - |y|) \left\{\sum_{n = -\infty}^{\infty} e^{-\pi \lambda (n + 1/2)^2} e^{2\pi i y(n + 1/2)} \right\} \cdot \left( \frac{e^{-\pi \lambda a^2} - e^{-\pi \lambda b^2}}{\lambda}\right)\,\dl \\
&= (1 - |y|)  \left\{ \sum_{n = 0}^{\infty} (-1)^n \frac{e^{-2\pi (y + n)a} - e^{-2\pi (y + n) b}}{y + n} \right.\\
& \ \ \ \ \ \ \ \ \ \ \ \ \ \ \ \ \ \ \ \ \ \ \ \ \ \ \ \ \ \ \ \ \ \ \ \ \ \ \ \ \ \ \ \ \ \ \ \left.- \sum_{n = 1}^{\infty} (-1)^n \frac{e^{2\pi (y - n)a} - e^{2\pi (y - n) b}}{y - n}\right\}.
\end{align*}
For $y = 0,$ the integral is
$$ 2\pi (b -a) - 2\log \left(\frac{1 + e^{-2\pi a}}{1 + e^{-2\pi b}}\right).$$
\\
{\it Integration of the second sum}. For $y = 0$, the second integral is 0. So we will compute its value for $0<y<1$. By calculus, we have
\begin{align}\label{Sec3.6}
\begin{split}
& -\int_{0}^{\infty} \frac{\lambda}{2\pi}\left\{\sum_{n = -\infty}^{\infty} 2\pi i \bigl(n + \hh\bigr) e^{-\pi \lambda (n + 1/2)^2} e^{2\pi i y(n + 1/2)} \right\} \left( \frac{e^{-\pi \lambda a^2} - e^{-\pi \lambda b^2}}{\lambda}\right)\dl \\
&= -i \frac{e^{\pi iy}}{\pi} \sum_{n = -\infty}^{\infty} \left( \frac{(n + 1/2)}{(n + 1/2)^2 + a^2} - \frac{(n + 1/2)}{(n + 1/2)^2 + b^2} \right)e^{2\pi i yn}.
\end{split}
\end{align}
Let 
\begin{equation*}
\hat{h}(x) = \frac{(x + 1/2)}{(x + 1/2)^2 + a^2} - \frac{(x + 1/2)}{(x + 1/2)^2 + b^2}.
\end{equation*} 
Again we will compute the sum above by Poisson summation formula,
\begin{equation}\label{Sec3.7}
 \sum_{n \in \mathbb{Z}} \hat{h}(n) e^{2\pi i yn} = \sum_{n \in \mathbb{Z}} h(y + n).
\end{equation}
Since $y + n \neq 0,$ we will compute $h(w),$ where $w \neq 0$,
\begin{align}\label{Sec3.8}
\begin{split}
h(w) &= \int_{-\infty}^{\infty} \left( \frac{(x + 1/2)}{(x + 1/2)^2 + a^2} - \frac{(x + 1/2)}{(x + 1/2)^2 + b^2} \right) e^{2\pi iwx}\, \dx \\
&= e^{-\pi i w} \int_{-\infty}^{\infty} \left( \frac{x}{x^2 + a^2} - \frac{x}{x^2 + b^2} \right) e^{2\pi iwx}\, \dx
\end{split}
\end{align}
We now use contour integration again.
\begin{enumerate}

\item[{\it Case 1:}] $w > 0$. The contour is a rectangle with vertices $-X, X, X + iY, -X + iY,$ where $X, Y > 0,$ and $X, Y \rightarrow \infty.$
Therefore 
\begin{align}\label{Sec3.9}
\begin{split}
h(w) &= 2\pi i  e^{-\pi i w} \left( {\rm res}_{x = ia} \frac{x}{x^2 + a^2} \cdot e^{2\pi iwx} - {\rm res}_{x = ib} \frac{x}{x^2 + b^2} \cdot e^{2\pi iwx} \right) \\
&= \pi i  e^{-\pi i w} \left( e^{-2\pi wa} - e^{-2\pi wb}\right).
\end{split}
\end{align}

\item[{\it Case 2:}] $w < 0$. The contour is a rectangle with vertices $X, -X, -X - iY, X - iY,$ where $X, Y > 0,$ and $X, Y \rightarrow \infty.$
Therefore 
\begin{align}\label{Sec3.10}
\begin{split}
h(w) &= - 2\pi i  e^{-\pi i w} \left( {\rm res}_{x = -ia} \frac{x}{x^2 + a^2} \cdot e^{2\pi iwx} - {\rm res}_{x = -ib} \frac{x}{x^2 + b^2} \cdot e^{2\pi iwx} \right) \\
&= - \pi i  e^{-\pi i w} \left( e^{2\pi wa} - e^{2\pi wb}\right).
\end{split}
\end{align}
\end{enumerate}
Finally, combining (\ref{Sec3.6})- (\ref{Sec3.10}) we obtain
\begin{align*}
& -\int_{0}^{\infty} \frac{\lambda}{2\pi}\left\{\sum_{n = -\infty}^{\infty} 2\pi i (n + \h) e^{-\pi \lambda (n + 1/2)^2} e^{2\pi i y(n + 1/2)} \right\} \left( \frac{e^{-\pi \lambda a^2} - e^{-\pi \lambda b^2}}{\lambda}\right) \dl  \\
&= \sum_{n = 0}^{\infty} (-1)^{n}  \left( e^{-2\pi (y + n)a} - e^{-2\pi (y + n) b}\right) -  \sum_{n = 1}^{\infty} (-1)^{n} \left( e^{2\pi (y - n) a} - e^{2\pi (y - n)b}\right),
\end{align*}
and this ultimately leads to part (ii) of Lemma \ref{prop:mainprop}.

\subsection{Proof of Lemma \ref{prop:mainproplowerbound}} The proof of part (i) of Lemma \ref{prop:mainproplowerbound} is very similar to the analogous part (i) of Lemma \ref{prop:mainprop}, proved in \S4.1. Part (iii) of Lemma \ref{prop:mainproplowerbound} is contained in \cite[Corollary 17, Example 3]{CLV}, and thus we will only focus here on part (ii).

\subsubsection{Part (ii)} Since $M_{\Delta}(y)$ is an even function, it suffices to consider $\hat{M}_{\Delta}(y)$ for $0 \leq y < 1$. We know from (\ref{eqn:measureforfunction}) and \cite[Theorem 4]{CLV} that
\begin{align}
\begin{split}\label{Sec3.15}
\hat{M}_{\Delta}(y) &= \int_0^{\infty} \left\{(1 - |y|) \sum_{n = -\infty}^{\infty} e^{-\pi \lambda n^2} e^{2\pi i yn} - \frac{\lambda}{2\pi}{\rm sgn}(y) \sum_{n = -\infty}^{\infty} 2\pi in \, e^{-\pi \lambda n^2} e^{2\pi i yn} \right\} \\
& \ \ \ \ \ \ \ \ \ \ \ \ \ \ \ \cdot \left( \frac{e^{-\pi \lambda a^2} - e^{-\pi \lambda b^2}}{\lambda}\right) \dl.
\end{split}
\end{align}
{\it Integration of the first sum.} By the same arguments used for $G_{\Delta}(y),$ we have
\begin{align}\label{Sec3.16}
\begin{split}
 &\int_0^{\infty} (1 - |y|) \left\{\sum_{n = -\infty}^{\infty} e^{-\pi \lambda n^2} e^{2\pi i yn} \right\} \left( \frac{e^{-\pi \lambda a^2} - e^{-\pi \lambda b^2}}{\lambda}\right) \dl \\
&  =(1 - |y|) \sum_{n = -\infty}^{\infty} \log\left( \frac{n^2 + b^2}{n^2 + a^2}\right)e^{2\pi iyn}\\
& = (1 - |y|)  \left\{ \sum_{n = 0}^{\infty}  \frac{e^{-2\pi (y + n)a} - e^{-2\pi (y + n) b}}{y + n} - \sum_{n = 1}^{\infty}  \frac{e^{2\pi (y - n)a} - e^{2\pi (y - n) b}}{y - n}\right\}\,,
\end{split}
\end{align}
for $y \neq 0$. For $y = 0$ we have the value
$$ 2\pi (b -a) - 2\log \left(\frac{1 - e^{-2\pi a}}{1 - e^{-2\pi b}}\right).$$
{\it Integration of the second sum.} By the same arguments used for $G_{\Delta}(y)$, the second term is equal to 
\begin{align}
\begin{split}\label{Sec3.17}
& -\int_{0}^{\infty} \frac{\lambda}{2\pi}\left\{\sum_{n = -\infty}^{\infty} 2\pi i n \,e^{-\pi \lambda n^2} e^{2\pi i yn} \right\}\left( \frac{e^{-\pi \lambda a^2} - e^{-\pi \lambda b^2}}{\lambda}\right) \dl \\
&= \sum_{n = 0}^{\infty}  \left( e^{-2\pi (y + n)a} - e^{-2\pi (y + n) b}\right) -  \sum_{n = 1}^{\infty}  \left( e^{2\pi (y - n) a} - e^{2\pi (y - n)b}\right).
\end{split}
\end{align}
Combining (\ref{Sec3.15}), (\ref{Sec3.16}) and (\ref{Sec3.17}) we complete the proof of part (ii).

\section{Appendix}
Here we show the following asymptotics:
\begin{enumerate}
\item[{\bf A1}.]
\begin{equation} \label{eqn:asymforintlambdanNlogn1}
 \int_{2}^{x} \frac{1}{t^{\alpha}\log t} \ \dt = \left\{ \begin{array}{l} \log \log x  + O(1),\ \ \  {\rm if} \,\,\,\, (1 - \alpha)\log x = O(1); \\ 
\frac{x^{1 - \alpha}}{(1 - \alpha)\log x} + \log \log x  + O\left(\frac{x^{1 - \alpha}}{(1 - \alpha)^2\log^2 x} \right),\ {\rm otherwise}. \end{array} \right.
\end{equation}
\item [{\bf A2}.]
\begin{equation} \label{eqn:asymforthirdterm} 
\int_2^x \frac{1}{t^{1 - \alpha}(2\log x - \log t)} \ \dt = \frac{1}{\alpha} \frac{x^{\alpha}}{\log x} + O\left( \frac{x^{\alpha}}{\log^2 x}\right).
\end{equation}
\end{enumerate}

\begin{proof}[Proof of A1.] The left hand side of (\ref{eqn:asymforintlambdanNlogn1}) can be written as
\begin{eqnarray} \label{Fai_night}
\int_{2}^{x} \frac{1}{t^{\alpha}\log t} \ \dt &=& \log \log x - \log \log 2 + \int_{(1 - \alpha)\log 2}^{(1 - \alpha)\log x} \frac{e^y - 1}{y} \ \dy.
\end{eqnarray}
If $(1 - \alpha)\log x = O(1),$ then $\frac{e^y - 1}{y} \leq e^{y} = O(1)$ for $(1 - \alpha)\log 2 \leq y \leq (1 - \alpha) \log x.$ Therefore the integral on the right hand side of (\ref{Fai_night}) is $O(1).$

Otherwise, the integral on the right hand side of (\ref{Fai_night}) is $O(1)$ plus
\begin{align*}
 \frac{e^y - y - 1}{y} & \big|^{(1- \alpha) \log x}_0 + \int_0^{(1- \alpha) \log x} \frac{e^y - y - 1}{y^2} \ \dy\\
&  = \frac{x^{1 - \alpha}}{(1- \alpha) \log x} + \int_0^{(1- \alpha) \log x} \frac{e^y - y - 1}{y^2} \ \dy + O(1).
\end{align*}
Since $\lim_{y \rightarrow 0} \frac{e^y - y - 1}{y^2} = \frac{1}{2},$ 
\begin{align*}
& \int_0^{(1- \alpha) \log x} \frac{e^y - y - 1}{y^2} \ \dy \\
&= \int_2^{\tfrac{(1 - \alpha)\log x}{2}} \frac{e^y - y - 1}{y^2} \ \dy + \int_{\tfrac{(1 - \alpha)\log x}{2}}^{(1 - \alpha)\log x} \frac{e^y - y - 1}{y^2} \ \dy + O(1) \\
&\leq \,\, \frac{1}{4}  \int_2^{\tfrac{(1 - \alpha)\log x}{2}} \bigl(e^y - y - 1\bigr) \dy + \frac{4}{(1 - \alpha)^2\log^2 x} \int_{\tfrac{(1 - \alpha)\log x}{2}}^{(1 - \alpha)\log x} \bigl(e^y - y - 1\bigr)  \dy + O(1) \\
&= O\left(\frac{x^{1 - \alpha}}{(1- \alpha)^2 \log^2 x} \right).
\end{align*}
\end{proof}

\begin{proof} [Proof of A2.] 

Let $y = x^{2}/t.$ The integral (\ref{eqn:asymforthirdterm}) becomes 
\begin{equation*}
x^{2\alpha} \int_{x}^{x^{2}/2} \frac{1}{y^{1 + \alpha} \log y} \, \dy = \frac{1}{\alpha} \frac{x^{\alpha}}{\log x} -  \frac{x^{2\alpha}}{\alpha} \int_{x}^{x^{2}/2} \frac{1}{y^{1 + \alpha} \log^2 y} \, \dy  + O\left(\frac{1}{\log x}\right).
\end{equation*} 
Expression (\ref{eqn:asymforthirdterm}) follows from the fact that
\begin{align*}
 \int_{x}^{x^{2}/2} \frac{1}{y^{1 + \alpha} \log^2 y} \ \dy  \ll \frac{1}{\log^2 x}  \int_{x}^{x^{2}/2} \frac{1}{y^{1 + \alpha}} \ \dy \ll \frac{1}{x^{\alpha} \log^2 x}.
\end{align*}

\end{proof}

\section*{Acknowledgments}
The authors would like to thank K. Soundararajan for the very valuable discussions during the preparation of this manuscript. This material is based upon work supported by the Institute for Advanced Study and the National Science Foundation under agreement No. DMS-0635607. E. Carneiro would also like to acknowledge support from the Capes/Fulbright grant BEX 1710-04-4.


\begin{thebibliography}{99}

\bibitem{CLV} 
E. Carneiro, F. Littmann, and J. D. Vaaler, 
\newblock Gaussian subordination for the Beurling-Selberg extremal problem, 
\newblock preprint. 

\bibitem{CV2}
E.~Carneiro and J.~D.~Vaaler,
\newblock Some extremal functions in Fourier analysis, II,
\newblock Trans. Amer. Math. Soc. (to appear).

\bibitem{Chandee}
V. Chandee,
\newblock Explicit upper bounds for $L$-functions on the critical line,
\newblock Proc. Amer. Math. Soc. 137 (2009), no. 12, 4049--4063.

\bibitem{CS}
V. Chandee and K. Soundararajan,
\newblock Bounding $|\zeta(\tfrac{1}{2} + it)|$ on the Riemann Hypothesis,
\newblock preprint.

\bibitem{Dav}
H. Davenport,
\newblock {\it Multiplicative number theory},
\newblock Third edition, Graduate Texts in Mathematics 74, Springer-Verlag, New York, 2000. 

\bibitem{GG}
D. A. Goldston and S. M. Gonek, 
\newblock A note on $S(t)$ and the zeros of the Riemann zeta-function,
\newblock Bull. Lond. Math. Soc. 39 (2007), no. 3, 482--486.

\bibitem{IK} 
H. Iwaniec and E. Kowalski, 
\newblock {\it Analytic Number Theory}, 
\newblock American Mathematical Society Colloquium Publications, vol. 53, 2004.

\bibitem{L}
J. E. Littlewood,
\newblock On the zeros of the Riemann zeta-function, 
\newblock Proc. Camb. Philos. Soc. 22 (1924), 295--318.

\bibitem{L2} 
J. E. Littlewood, 
\newblock On the Riemann zeta-function, 
\newblock Proc. London Math. Soc. 24 (2) (1925), 175--201.

\bibitem{L3} 
J. E. Littlewood, 
\newblock On the function $1/\zeta(1 + it)$, 
\newblock Proc. London Math. Soc.  27 (1928), 349--357.

\bibitem{MV} 
H.L. Montgomery and R.C. Vaughan,
\newblock Multiplicative number theory I. Classical Theory,
\newblock Cambridge University Press, Cambridge, 2007.

\bibitem{RS}
K. Ramachandra and A. Sankaranarayanan, 
\newblock On some theorems of Littlewood and Selberg,
\newblock I. J. Number Theory 44 (1993),  no. 3, 281--291.

\bibitem{Sound}
K. Soundararajan, 
\newblock Moments of the Riemann zeta-function,
\newblock Ann. of Math. (2) 170 (2009), no. 2, 981--993. 

\bibitem{T}
E. C. Titchmarsh,
\newblock {\it The theory of the Riemann zeta-function},
\newblock Second edition, The Clarendon Press, Oxford University Press, New York, 1986. 


\end{thebibliography}
\end{document}